%
%
%
%
%
%


\documentclass[11pt]{amsart}
\usepackage[english]{babel}



\usepackage{latexsym}
\usepackage{amssymb}
\usepackage{amsmath}
\usepackage{amsfonts}
\usepackage[mathscr]{eucal}
\usepackage{color}


\usepackage{amscd}


\usepackage{graphicx}
\usepackage{epsfig}
\usepackage{relsize}


\usepackage[table,xcdraw]{xcolor}
\usepackage{longtable}
\usepackage{rotating}
\usepackage{array}
\usepackage{multicol}
\usepackage{multirow}
\usepackage{makecell}
\usepackage{mathtools}
  \usepackage{cellspace}
    \cellspacetoplimit 15pt
    \cellspacebottomlimit 6pt
    \setlength{\arraycolsep}{7pt}
\usepackage{booktabs}


\usepackage{amsthm}
\usepackage[all,cmtip]{xy}
\usepackage[pdfstartview=FitH]{hyperref}
\usepackage{pst-grad} 
\usepackage{pst-plot} 
\usepackage{comment}
\usepackage{psfrag}
\usepackage{verbatim}
\usepackage{hyperref}
\hypersetup{colorlinks,allcolors=blue}
\usepackage{pifont}


\usepackage{soul}
 \usepackage[normalem]{ulem}


\usepackage{lineno}



\newtheorem{thm}{Theorem}[section]
\newtheorem*{thm*}{Theorem}
\newtheorem{cor}[thm]{Corollary}	
\newtheorem*{cor*}{Corollary}		
\newtheorem{lem}[thm]{Lemma}
\newtheorem{prop}[thm]{Proposition}

\newtheorem{theorem}{Theorem}



\theoremstyle{definition}
\newtheorem*{definition*}{Definition}    	

\newtheorem{remark}[theorem]{Remark}
\newtheorem{case}{Case}[section]

\newtheorem{subcase}{Case}[case]

\newtheorem*{ack}{Acknowledgements}

\numberwithin{equation}{section}


\DeclareMathOperator{\Susp}{Susp}
\DeclareMathOperator{\curv}{curv}
\DeclareMathOperator{\seif}{Seif}
\DeclareMathOperator{\diam}{diam}
\DeclareMathOperator{\vol}{vol}

\newcommand{\RR}{\mathbb{R}}
\newcommand{\CC}{\mathbb{C}}

\newcommand{\RP}{\mathbb{R}P}

\newcommand{\Real}{\mathrm{\mathbb{R}}}
\newcommand{\bpt}{\mathrm{B(pt)}}
\newcommand{\bstwo}{\mathrm{B}(S_2)}
\newcommand{\bsfour}{\mathrm{B}(S_4)}
\newcommand{\bptwo}{\mathrm{B}(\mathbb{R}P^2)}
\newcommand{\mo}{\mathrm{Mo}}
\newcommand{\SL}{\widetilde{\mathrm{SL}_2(\mathbb{R})}}

\newcommand{\Nil}{\mathrm{Nil}}
\newcommand{\Sol}{\mathrm{Sol}}


\newcommand{\eGHto}{\stackrel {_\mathrm{eGH}}{\longrightarrow} }
\newcommand{\GHto}{\stackrel { _\textrm{GH}}{\longrightarrow} }
\newcommand{\csum}{\operatornamewithlimits{\#}}



\begin{document}



\title[Sufficiently collapsed Alexandrov $3$-spaces]{Sufficiently collapsed irreducible Alexandrov $3$-spaces are geometric \\ 
}


\author[F.~Galaz-Garc\'ia]{Fernando Galaz-Garc\'ia$^{\ast}$}

\author[L.~Guijarro]{Luis Guijarro$^{\ast\ast}$}

\author[J.~N\'u\~nez-Zimbr\'on]{Jes\'us N\'u\~nez-Zimbr\'on $^{\ast\ast\ast}$}


\thanks{$^*$Supported in part by research grants MTM2011-22612, MTM2014-57769-C3-3-P from the Ministerio de Econom\'ia y Competitividad, MINECO: ICMAT Severo Ochoa project SEV-2011-0087, and by the Deutsche Forschungsgemeinschaft grant GA 2050 2-1 within the  Priority Program SPP 2026 ``Geometry at Infinity''.} 

\thanks{$^{\ast\ast}$Supported by research grants MTM2011-22612, MTM2014-57769-C3-3-P from the Ministerio de Econom\'ia y Competitividad and MINECO: ICMAT Severo Ochoa project SEV-2011-0087.}

\thanks{$^{\ast\ast\ast}$Supported by UCMEXUS-CONACYT postdoctoral grant  ``Alexandrov Geometry''.}


\address[F.~Galaz-Garc\'ia]{Institut f\"ur Algebra und Geometrie, Karlsruher Institut f\"ur Technologie (KIT), Germany}
\email{galazgarcia@kit.edu}

\address[L.~Guijarro]{Department of Mathematics, Universidad Aut\'onoma de Madrid and ICMAT CSIC-UAM-UC3M, Spain}
\email{luis.guijarro@uam.es} 

\address[J.~N\'u\~nez-Zimbr\'on]{Department of Mathematics, University of California, Santa Barbara, CA 93106}
\email{zimbron@ucsb.edu}


\date{\today}


\subjclass[2010]{53C23, 57S15, 57S25}
\keywords{Alexandrov space, collapse, Thurston geometries}


\begin{abstract}
We prove that sufficiently collapsed, closed and irreducible three-dimensional Alexandrov spaces are modeled on one of the  three-dimensional Thurston geometries, excluding the hyperbolic one. This extends a result of Shi\-o\-ya and Ya\-ma\-gu\-chi, 
originally formulated for Riemannian manifolds, to the Alexandrov setting. 
\end{abstract}
\setcounter{tocdepth}{1}

\maketitle

\tableofcontents




\section{Introduction}
In Riemannian geometry, collapse imposes strong geometric and topological restrictions on the spaces on which it occurs. Recall that that there are eight  three-dimensional \emph{Thurston geometries}: $\mathbb{S}^3$, $\mathbb{R}^3$, $\mathbb{H}^3$, $\mathbb{S}^2\times \mathbb{R}$, $\mathbb{H}^2\times \mathbb{R}$, $\SL$, $\Nil$ and $\Sol$ (see \cite{Sco}). A closed (i.e. compact and without boundary) $3$-manifold is \emph{geometric} if it admits a geometric structure modeled on one of these geometries.  In this context, Shioya and Yamaguchi \cite{ShiYam} obtained the following result for sufficiently collapsed Riemannian $3$-manifolds.


\begin{thm*}[Shioya and Yamaguchi \protect{\cite[Corollary 0.9]{ShiYam}}]
\label{T:SHIOYA_YAMAGUCHI}
Let $D>0$ and let $\mathcal{M}(3,D)$ be the class of closed Riemannian $3$-manifolds with diameter at most $D$. For any $D>0$, there exists a constant $\varepsilon=\varepsilon(D)>0$ such that if a closed, prime $3$-manifold with infinite fundamental group admits a Riemannian metric contained in $\mathcal{M}(3,D)$ with volume $<\varepsilon$, then it admits a geometric structure modeled on one of the six geometries $\mathbb{R}^3$, $\mathbb{S}^2\times \mathbb{R}$, $\mathbb{H}^2\times \mathbb{R}$, $\SL$, $\Nil$ and $\Sol$.
\end{thm*}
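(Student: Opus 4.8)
The plan is to argue by contradiction, combining Gromov--Hausdorff compactness with the structure theory of collapsing $3$-manifolds. Suppose the conclusion fails for some $D>0$. Then there is a sequence $(M_i,g_i)$ of closed prime $3$-manifolds with infinite fundamental group, with $g_i\in\mathcal{M}(3,D)$ (so, as is implicit in the definition of $\mathcal{M}(3,D)$, with sectional curvature $\geq -1$ and $\diam(M_i,g_i)\leq D$) and $\vol(M_i,g_i)\to 0$, such that no $M_i$ is modeled on one of the six listed geometries. By Gromov precompactness, after passing to a subsequence $(M_i,g_i)\GHto X$ for some compact Alexandrov space $X$ of curvature $\geq -1$; since the volume collapses, $\dim X\leq 2$. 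One may also record that a prime $3$-manifold is either $\mathbb{S}^2\times\mathbb{S}^1$ (which carries the $\mathbb{S}^2\times\mathbb{R}$ geometry) or irreducible, and that an irreducible $3$-manifold with infinite $\pi_1$ is aspherical; in particular each $M_i$ may be assumed irreducible and aspherical.

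The main step is to extract a graph-manifold (or $\Sol$-manifold) structure on $M_i$, for $i$ large, from the geometry of the collapse. For this I would invoke the fibration and decomposition theorems for $3$-manifolds collapsing under a lower curvature bound: there is a controlled singular subset $S\subset X$ over whose complement $M_i$ is, up to diffeomorphism, the total space of a fiber bundle with infranil fiber of dimension $3-\dim X$ (a circle when $\dim X=2$, a torus or Klein bottle when $\dim X=1$), while over $S$ and over $\partial X$ the local models are solid tori, solid Klein bottles and similar pieces. Patching these together gives: if $\dim X=2$, then $M_i$ is Seifert fibered over the surface (with boundary, possibly with cone-type points) underlying the $2$-dimensional Alexandrov space $X$, with exceptional fibers over $S$ and Dehn fillings over $\partial X$; if $\dim X=1$, then $M_i$ fibers over $S^1$, or is glued from two twisted bundles over an interval, with fiber a torus or Klein bottle and monodromy either periodic/reducible or Anosov; and if $\dim X=0$, then $\diam(M_i,g_i)\to 0$ and $M_i$ is again a graph manifold. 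In each case $M_i$, for $i$ large, is a closed graph manifold, which is a $\Sol$-manifold precisely when the relevant monodromy is Anosov.

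It then remains to promote this to a Thurston geometric structure. A closed prime graph manifold is either Seifert fibered or a $\Sol$-manifold; by the geometrization of Seifert fibered spaces (Scott, as part of Thurston's work), a closed Seifert fibered $3$-manifold is modeled on one of $\mathbb{S}^3$, $\mathbb{R}^3$, $\mathbb{S}^2\times\mathbb{R}$, $\mathbb{H}^2\times\mathbb{R}$, $\Nil$ or $\SL$, and the spherical case is ruled out because $\pi_1(M_i)$ is infinite; a $\Sol$-manifold carries the $\Sol$ geometry. Hence each $M_i$ with $i$ large is modeled on one of the six geometries in the statement, contradicting the choice of the sequence. Therefore the constant $\varepsilon=\varepsilon(D)$ exists.

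I expect the main obstacle to be the structure theory invoked in the second paragraph: the analysis of $3$-dimensional collapse under only a \emph{lower} sectional curvature bound (equivalently, with an Alexandrov space as the limit). In contrast to the two-sided-bound case, where Cheeger--Fukaya--Gromov $N$-structures apply, one must control the local geometry near points of $X$ where the curvature concentrates and the collapsing fiber can change dimension or topology, build the Seifert/graph structure globally by patching the local models consistently (extending the local $S^1$- or $T^2$-symmetry, matching fiber orientations, and treating $\partial X$ and the apparent conical singularities of the $2$-dimensional limit), and exclude local pieces incompatible with a graph decomposition using the lower curvature bound, via a Margulis-type lemma together with normal-injectivity-radius estimates along the fibers. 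This is the technical heart of Shioya--Yamaguchi; in a self-contained account I would take their fibration and decomposition theorems as input and concentrate on the bookkeeping that converts their output into one of the six Thurston geometries.
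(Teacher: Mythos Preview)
The paper does not give its own proof of this theorem: it is stated in the introduction as a known result, cited verbatim as \cite[Corollary~0.9]{ShiYam}, and serves only as motivation for the paper's main result, Theorem~\ref{THM:MAIN_THEOREM}, which extends it to Alexandrov $3$-spaces. So there is no ``paper's proof'' to compare against directly.

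That said, your proposal is a faithful sketch of Shioya and Yamaguchi's own argument, and it is also the template the present paper follows for its Alexandrov generalization: argue by contradiction, extract a collapsing sequence by Gromov precompactness, invoke the structure theory of collapse under a lower curvature bound (here \cite{MitYam}, the Alexandrov analogue of \cite{ShiYam}) to list the possible topological types of $X_i$ for large $i$, and then verify case by case that each such type carries one of the indicated Thurston geometries. The paper's Sections~\ref{S:TWO_DIM_LIMIT}--\ref{S:ZERO_DIM_LIMIT} carry out precisely this bookkeeping, organized according to $\dim Y\in\{2,1,0\}$, just as you outline. Your identification of the technical heart --- building the global Seifert/graph structure from local fibration data under only a lower curvature bound --- is exactly right, and both \cite{ShiYam} and the present paper treat that structure theory as a black box. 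One small point: you correctly note that a lower curvature bound must be implicit in the definition of $\mathcal{M}(3,D)$ (Gromov precompactness and the collapse theory both require it); the paper's statement of the theorem omits it, but its own Theorem~\ref{THM:MAIN_THEOREM} makes the bound $\curv\geq -1$ explicit.
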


Hyperbolic geometry is ruled out by the fact that a manifold with such a geometry has non-vanishing simplicial volume (see \cite[Theorem 6.2]{Thurston1978}), and the fact that manifolds with positive simplicial volume do not collapse (see \cite[pp.~67--68]{Gromov01}). We point out that, although the geometry $\mathbb{S}^3$ appears in the original statement of the result in \cite{ShiYam}, the assumption that the fundamental group of the manifold is infinite precludes it from admitting spherical geometry. 
Note, in any case, that by Perelman's proof of Thurston's Elliptization Conjecture, any closed $3$-dimensional manifold with finite fundamental group has spherical geometry.

 Closed Riemannian manifolds are special cases of Alexandrov spaces (with curvature bounded below). In this article, we extend the preceding theorem to the Alexandrov setting.  In dimension three, the structure of Alexandrov spaces is 
fairly well understood (see \cite{GalGui,MitYam}). Topologically, these spaces are either $3$-manifolds or quotients of $3$-manifolds by orientation reversing involutions with only isolated fixed points (see \cite{GalGui}). 

Before stating our main theorem, we need some definitions. Recall that a non-trivial closed $3$-manifold $M$ is \emph{prime} if it cannot be presented as a connected sum  of two  non-trivial closed $3$-manifolds. A closed $3$-manifold is \emph{irreducible} if every embedded $2$-sphere bounds a $3$-ball. It is known that, with the exception of manifolds homeomorphic to $\mathbb{S}^3$ or $\mathbb{S}^1\times \mathbb{S}^2$, a closed $3$-manifold is prime if and only if it is irreducible (see \cite[Lemma~1]{Mi}). Since $\mathbb{S}^1\times \mathbb{S}^2$ is geometric, and $\mathbb{S}^3$ has finite fundamental group, one can think of the theorem above as a statement about irreducible $3$-manifolds. Therefore, in generalizing this theorem to Alexandrov spaces we will focus our attention on the irreducible case. In view of this, we need to define \emph{irreducibility} for this more general class of spaces.


\begin{definition*} Let $X$ be a closed Alexandrov $3$-space. We say that  $X$ is \emph{irreducible} if every embedded $2$-sphere in $X$ bounds a $3$-ball and, in the case that the set of topologically singular points of $X$ is non-empty, it is further required that every $2$-sided $\mathbb{R}P^2$ bound a $K(\mathbb{R}P^2)$, a cone over $\mathbb{R}P^2$. 
\end{definition*}

With this definition in hand, we state our main result. 


\begin{theorem}
\label{THM:MAIN_THEOREM}
For any $D>0$ there exists $\varepsilon=\varepsilon(D)>0$ such that, if $X$ is a closed, irreducible Alexandrov $3$-space of $\mathrm{curv}\geq -1$ satisfying that $\diam X\leq D$ and $\vol X <\varepsilon$, then  $X$ admits a geometric structure modeled on one of the seven geometries $\mathbb{R}^3$, $\mathbb{S}^3$, $\mathbb{S}^2\times \mathbb{R}$, $\mathbb{H}^2\times \mathbb{R}$, $\SL$, $\Nil$ and $\Sol$.
\end{theorem}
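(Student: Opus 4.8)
The plan is to follow the strategy of Shioya--Yamaguchi, using the structure theory of collapsing Alexandrov $3$-spaces in place of the collapsing theory for Riemannian $3$-manifolds. First I would choose $\varepsilon(D)$ so small that if $\vol X < \varepsilon$ and $\diam X \le D$ with $\mathrm{curv} \ge -1$, then $X$ is $\delta$-collapsed in the sense that it lies arbitrarily close, in the Gromov--Hausdorff distance, to a lower-dimensional Alexandrov space $Y$ with $\dim Y \in \{0,1,2\}$. Here I would invoke the fact (from the cited works \cite{GalGui,MitYam} and the Alexandrov analogue of the Shioya--Yamaguchi fibration/generalized Seifert structure theorems) that a sufficiently collapsed closed Alexandrov $3$-space $X$ admits a decomposition reflecting this collapse: over the regular part of the limit $Y$, $X$ fibers by circles, tori, Klein bottles, or spherical/$\mathbb{R}P^2$-fibers, and over the singular strata of $Y$ one gets controlled pieces (solid tori, solid Klein bottles, cone-bundles, and a finite list of ``caps''). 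The irreducibility hypothesis — with the $\mathbb{R}P^2/K(\mathbb{R}P^2)$ condition built into the definition precisely so that the argument goes through in the presence of topologically singular points — is then used to rule out the reducible gluings and pin down the global topology.

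The key steps, in order, would be: (1) Reduce to the cases $\dim Y = 0, 1, 2$ separately. When $\dim Y = 0$, $X$ has small diameter and finite fundamental group after passing to a cover, forcing, via the Alexandrov stability/finiteness results, that $X$ is a spherical space form or a quotient thereof — hence modeled on $\mathbb{S}^3$; the irreducibility assumption eliminates the non-spherical prime pieces. (2) When $\dim Y = 1$, $X$ collapses to an interval or a circle; the generic fiber is a closed surface (sphere, torus, Klein bottle, $\mathbb{R}P^2$), and analyzing the holonomy and the endpoint fibers yields a surface bundle over $S^1$ or a union of two twisted $I$-bundles, and irreducibility forces the fiber to be $\torus^2$ or $\klein$, giving $\Sol$, $\Nil$, $\mathbb{H}^2\times\RR$, $\RR^3$, or $\SL$ depending on the monodromy. (3) When $\dim Y = 2$, $X$ is a generalized Seifert fibered space over the $2$-orbifold $Y$; here I would cite the classification of geometric Seifert Alexandrov $3$-spaces and match the Euler number and orbifold Euler characteristic against the six Seifert geometries $\RR^3$, $\mathbb{S}^3$, $\mathbb{S}^2\times\RR$, $\mathbb{H}^2\times\RR$, $\Nil$, $\SL$, again using irreducibility (and the $\mathbb{R}P^2$-condition at singular fibers over cone points of the orbifold) to exclude bad fibers and non-orientable anomalies.

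The main obstacle I expect is step (2)–(3) in the \emph{presence of topologically singular points}: on the Alexandrov side the ``Seifert'' structure can have singular fibers whose neighborhoods are cones over $\mathbb{R}P^2$ rather than solid tori, and the total space need not be a manifold. Handling these requires (a) the correct Alexandrov replacement of the Shioya--Yamaguchi local models — which is exactly where the nonstandard clause in the definition of irreducibility enters — and (b) verifying that each resulting closed Alexandrov $3$-space genuinely carries one of the seven geometric structures, i.e. that the candidate geometric model built by gluing the pieces is not merely a topological identification but an isometric one up to rescaling. A secondary subtlety is ruling out $\mathbb{H}^3$: as in the Riemannian case this follows because positive simplicial volume obstructs collapse, but one must check the simplicial-volume/collapse argument still applies to Alexandrov spaces (e.g. by passing to the manifold double-branched cover when $X$ has singular points), so that no irreducible collapsed $X$ is hyperbolic.
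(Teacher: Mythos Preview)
Your overall architecture---contradiction, collapse, case split on $\dim Y$---matches the paper exactly, and you correctly anticipate that the double branched cover is the device for handling topologically singular points and for the simplicial-volume argument ruling out $\mathbb{H}^3$. But several of your case analyses do not go through as stated, and in the hardest case you are missing the key machinery.

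\textbf{Dimension $0$.} Your claim that small diameter forces finite fundamental group (hence spherical geometry) is not what happens: by Mitsuishi--Yamaguchi's classification the spaces collapsing to a point include everything in Tables~\ref{TBL:2-DIM_LIMIT} and~\ref{TBL:1-DIM_LIMIT} (e.g.\ torus bundles over $\mathbb{S}^1$, with infinite $\pi_1$), together with the non-negatively curved closed spaces. The paper does not prove anything directly here; it simply reduces to the two higher-dimensional analyses already carried out and to the classification of non-negatively curved closed Alexandrov $3$-spaces in \cite{GalGui}.

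\textbf{Dimension $1$.} Your assertion that irreducibility forces the generic fiber to be $T^2$ or $K^2$ is false: $D^3\cup D^3\cong\mathbb{S}^3$ and $D^3\cup(\mathbb{R}P^3\setminus\mathrm{int}\,D^3)\cong\mathbb{R}P^3$ are irreducible with $\mathbb{S}^2$-fiber, and the $T^2$-boundary gluings yield lens spaces. In fact the paper shows that \emph{every} space in Table~\ref{TBL:1-DIM_LIMIT} is geometric, with irreducibility playing essentially no role in this section; the work is an exhaustive case-by-case computation of orientable double (branched) covers of each possible gluing $B\cup B'$, identifying each with a known geometric manifold.

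\textbf{Dimension $2$.} This is where the real gap lies. Your plan to ``match Euler number and orbifold Euler characteristic'' is the classical route for Seifert \emph{manifolds}, but the generalized Seifert spaces here can have $\mathrm{B}(\mathrm{pt})$-fibers and attached generalized solid tori/Klein bottles, and $X$ need not be a manifold. The paper's argument is quite different: it invokes the equivariant classification of Alexandrov $3$-spaces with local circle actions \cite{GalNun} to write $X_i$ as an explicit equivariant connected sum (Theorem~\ref{THM:INVARIANTS}), uses irreducibility via Lemma~\ref{L:IRREDUCIBLE_IMPLIES_PRIME} to force a single summand, and then---when $\mathrm{B}(\mathrm{pt})$-fibers are present---passes to the orientable double branched cover $\tilde{X}_i$, runs equivariant Ricci flow with surgery \cite{DinkLeeb,Per3,Dink} to decompose the involution into standard pieces, and applies Kim--Tollefson's splitting of PL involutions of non-prime $3$-manifolds \cite{KimToll,Kwun} to reach a contradiction with irreducibility. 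None of this machinery appears in your sketch, and without it the singular $\dim Y=2$ case does not close.
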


 In Theorem~\ref{THM:MAIN_THEOREM}, by the \emph{volume} of an Alexandrov $3$-space, we mean its $3$-dimensional Hausdorff measure, normalized so that volume of $3$-dimensional Riemannian manifolds agrees with the usual Riemannian volume.
 

\begin{remark}
\label{R:HYP_GEOM_NOT}
As in the Riemannian case, one can rule out the appearance of hyperbolic geometry by combining the fact that the simplicial volume of a collapsing Alexandrov space is zero (see \cite[Corollary 1.7]{MitYam2}) with the fact that the simplicial volume of a hyperbolic manifold must be bounded below by the Riemannian volume (see \cite[Theorem 6.2]{Thurston1978}). For more details, see the end of Section~\ref{S:OUTLINE_OF_PROOF}.
\end{remark}

We prove Theorem~\ref{THM:MAIN_THEOREM} by carefully studying the metric and topological structure of collapsed irreducible  Alexandrov $3$-spaces and their orientable double branched covers (in the case where the space is not a manifold).  The extensive work of Mitsuishi and Yamaguchi on collapsed Alexandrov $3$-spaces \cite{MitYam}, combined with the irreducibility hypothesis, allows us to obtain fairly explicit topological descriptions of closed, collapsed, irreducible Alexandrov $3$-spaces, and to exhibit them as geometric $3$-manifolds or their quotients by orientation-reversing involutions with only isolated fixed points. Classification results of Alexandrov spaces with (local) circle actions (see \cite{GalNun,NZ}) and classical results on involutions on $3$-manifolds (see \cite{KimToll,Kwun}) also play an important role in the proof. 
\\

Our article is divided as follows. In Section~\ref{S:PRELIM} we collect results on general Alexandrov $3$-spaces, as well as on those spaces that admit collapse or (local) isometric circle actions. In Section~\ref{S:DBL_BRANCHED_COLAPSE} we show that a non-manifold Alexandrov $3$-space collapses if and only if its canonical orientable double branched Alexandrov cover also collapses. We prove Theorem~\ref{THM:MAIN_THEOREM} in Sections~\ref{S:OUTLINE_OF_PROOF}--\ref{S:ZERO_DIM_LIMIT}. 


\begin{ack}The authors thank John Harvey for pointing out reference \cite{MitYam2}.
\end{ack}


\section{Preliminaries}
\label{S:PRELIM}


\subsection*{Three-dimensional Alexandrov spaces}

In this section we give a brief account of the basic structural properties of closed Alexandrov spaces of dimension $3$. We refer the reader to \cite{GalGui} for a more detailed account. 

Let $X$ be a closed Alexandrov $3$-space. The space of directions $\Sigma_x X$ at each point $x$ in $X$ is a closed Alexandrov $2$-space with curvature bounded below by $1$. Therefore, by the Bonnet-Myers Theorem \cite[Theorem 10.4.1]{BBI}, the fundamental group of $\Sigma_x X$ is finite. This in turn implies that $\Sigma_x X$ is homeomorphic to a $2$-sphere $\mathbb{S}^2$ or a real projective plane $\mathbb{R}P^2$. A point in $X$ whose space of directions is homeomorphic to $\mathbb{S}^2$ is called a \textit{topologically regular point}. Otherwise, the point is called \textit{topologically singular}. The set of topologically regular points is open and dense in $X$.

By Perelman's Conical Neighborhood Theorem \cite{Per2}, every point $x$ in $X$ has a neighborhood pointed-homeomorphic to the cone over the $\Sigma_xX$. This result implies that the set of topologically singular points of $X$ is finite and $X$ is homeomorphic to a compact $3$-manifold with a finite number of $\mathbb{R}P^2$-boundary components to which one glues in cones over $\mathbb{R}P^2$. It is an easy consequence of this description that $X$ must have an even number of topologically singular points.

A closed Alexandrov $3$-space $X$ whose set of topologically singular points is non-empty can also be described as a
quotient of a closed, orientable, topological $3$-manifold $\tilde{X}$ by an orientation-reversing involution $\iota: \tilde{X}\to \tilde{X}$ with only fixed points. The $3$-manifold $\tilde{X}$ is the so-called \textit{orientable double branched cover of $X$} (see, for example, \cite[Lemma 1.7]{GalGui}). In addition to this topological description, it is possible to lift the metric on $X$ to $\tilde{X}$, so that $\tilde{X}$ becomes an Alexandrov space with the same lower curvature bound as $X$ and $\iota$ is an isometry with respect to the lifted metric. In particular, $\iota$ is equivalent to a smooth involution on $\tilde{X}$ regarded as a smooth $3$-manifold. We refer the reader to \cite[ Lemma 1.8]{GalGui} and \cite[Section 5]{GW} for more details.

The geometric structure of Alexandrov $3$-spaces was further studied in \cite{GalGui}. We recall that $X$ is \textit{geometric} (or admits a Thurston geometry) if it is a quotient of one of the eight $3$-dimensional Thurston geometries by some cocompact lattice (see \cite{Sco}). We say that $X$ \textit{admits a geometric decomposition} if $X$ can be cut along a family of spheres, projective planes, tori, and Klein bottles in such a way that the resulting pieces are geometric. The \textit{geometrization of closed Alexandrov $3$-spaces} was obtained in \cite{GalGui}, i.e., it was proved that every closed Alexandrov $3$-space admits a geometric decomposition into geometric Alexandrov $3$-spaces.


\subsection*{Collapsing Alexandrov 3-spaces} Let $\{X_i\}_{i=1}^{\infty}$ be a sequence of $n$-di\-men\-sion\-al Alexandrov spaces with diameters uniformly bounded above by $D>0$ and $\curv\geq k$ for some $k\in \RR$.  After passing to a subsequence, Gromov's Precompactness Theorem implies that there exists an Alexandrov space $Y$ with $\diam Y\leq D$ and $\curv Y\geq k$ such that $X_i\GHto Y$. As in the Riemannian case, the sequence $X_i$ is said to \textit{collapse} to $Y$ if $\dim Y<n$. We will also say that an $n$-dimensional Alexandrov space $X$ \textit{collapses} (or that it is a \textit{collapsing Alexandrov space}) if there exists a sequence of Alexandrov metrics $\{d_i\}_{i=1}^{\infty}$ on $X$, such that $\{(X,d_i)\}_{i=1}^{\infty}$ is a collapsing sequence. In \cite{MitYam}, A.\ Mitsuishi and T.\ Yamaguchi obtained a topological classification of closed collapsing $3$-dimensional Alexandrov spaces, describing them as a union of certain pieces. We now give a brief account of those pieces with topological or metric singularities.
\\  

\noindent\emph{The space $\bpt$.} Let $D^2\times \mathbb{S}^1\subset \RR^2\times \CC$ be equipped with the usual flat product metric. An isometric involution $\alpha$ on $D^2\times \mathbb{S}^1$ is defined by 
\[
\alpha(x, e^{i\theta}):= (-x, e^{-i\theta}).
\]
The space $\bpt:= D^2\times \mathbb{S}^1/\alpha $ is an Alexandrov space of $\curv\geq 0$ with two topologically singular points. There is a natural projection $p\colon \bpt\to K_1(\mathbb{S}^1)$ sending an interval joining the topologically singular points to the vertex $o$ of the cone. This projection is a fibration on $K_1(\mathbb{S}^1)\setminus\{o\}$. 

{ 
An alternate, topological description of $\bpt$ appears after \cite[Example 2.60]{MitYam}: choose two copies of cones over $\mathbb{R}P^2$, select a  disc $D^2_i$, $i=0,1$ on each $\mathbb{R}P^2$-boundary, and glue both cones using some homeomorphism $\varphi:D^2_0\to D^2_1$. The resulting space does not depend on the chosen $\varphi$, and is homeomorphic to $\bpt$. It is clear that its boundary is obtained by taking two M\"{o}bius bands glued by their boundaries, i.e, a Klein bottle.
}
\\


\noindent\emph{Spaces with $2$-dimensional souls.} We now describe three different closed Alexandrov $3$-spaces as quotients of certain involutions (cf. \cite{Nat})
\begin{itemize}
\item[(i)] $\bstwo:= \mathbb{S}^2\times [-1,1]/(\sigma,-\mathrm{id})$, where $\mathbb{S}^2$ is a sphere of non-negative curvature in the Alexandrov sense with an isometric involution $\sigma:\mathbb{S}^2\to \mathbb{S}^2$ topologically conjugate to the involution on the $2$-sphere given by the suspension of the antipodal map on the circle.
The resulting space is a topological manifold, in spite of having two points whose space of directions is not isometric to a round sphere.
\vspace{.2cm}
\item[(ii)]  $\bsfour:= T^2\times [-1,1]/(\sigma,-\mathrm{id})$, where $T^2$ is a flat torus and $\sigma:T^2\rightarrow T^2$ is an isometric involution on $T^2$ topologically conjugate to the unique involution on $T^2$ whose quotient is $\mathbb{S}^2$. 
{ 
This space has four topologically singular points, corresponding to the four fixed points of the involution; this can be easily seen by observing that at each such point, the differential of the involution acts as the antipodal map on the unit tangent sphere. Its oriented branched cover is clearly $\mathbb{T}^2\times [-1,1]$.
} 

\vspace{.2cm}
\item[(iii)] $\bptwo:= K^2\times [-1,1]/(\sigma,-\mathrm{id})$, where 
$K^2$ is a flat Klein bottle and $\sigma:K^2\to K^2$ is an isometric involution topologically conjugate to the unique involution on  $K^2$ whose quotient is $\RR P^2$.

\end{itemize}	

\noindent\emph{Generalized Seifert fiber spaces.} A \textit{generalized Seifert fibration} of a topological $3$-orbifold $M$ over a topological $2$-orbifold $B$ (both possibly with boundaries) is a map $f\colon M\to X$ whose fibers are homeomorphic to circles or bounded closed intervals. It is required that for every $x\in B$, there is a neighborhood $U_x$ homeomorphic to a $2$-disk such that
\begin{itemize}
\item[(i)] if $f^{-1}(x)$ is homeomorphic to a circle, then there exists a fiber-preserving homeomorphism of $f^{-1}(U_x)$ to a Seifert fibered solid torus in the usual sense, and
\item[(ii)] if $f^{-1}(x)$ is homeomorphic to an interval, then there is a fiber-preserving homeomorphism of $f^{-1}(U_x)$ to the space $\bpt$, with respect to the fibration $(\bpt,p^{-1}(o))\to (K_1(\mathbb{S}^1),o)$. 
\end{itemize}
Furthermore, for any compact component $C$ of $\partial B$ there is a collar neighborhood $N$ of $C$ in $B$ such that $f|_{f^{-1}(N)}$ is a usual circle bundle over $N$. We say that $M$ is a \textit{generalized Seifert fibered space} and we use the notation $M=\seif(B)$. 
\\

\noindent\emph{Generalized solid tori and Klein bottles.} A \emph{generalized solid torus} (GST) (respectively, \emph{generalized solid Klein Bottle} (GSKB)) is a topological $3$-orbifold $Y$ with boundary homeomorphic to a torus (respectively, a Klein bottle). It admits a map $Y\to \mathbb{S}^1$ such that the fibers are homeomorphic to either a $2$-disk or a M\"{o}bius band, and the  fiber type can only change at a finite number of \textit{corner points} in $\mathbb{S}^1$. We refer the reader to \cite[Definition 1.4]{MitYam} for the precise definitions. A GST (respectively, a GSKB) is said to be \emph{of type $N$} if it has $2N$ topologically singular points. The GST of type $0$ are defined to be  $\mathbb{S}^1\times D^2$ and $\mathbb{S}^1\times \mo$, where $\mo$ denotes the M\"obius strip. Similarly, one defines the GSKB of type $0$ as $\mathbb{S}^1\widetilde{\times} D^2$ and $\mathbb{S}^1\tilde{\times}\mo$. For convenience, we will denote a generalized solid torus (respectively,  generalized solid Klein bottle) of type $N$ by $GST_N$ (respectively, $GSKB_N$).  
\\

\noindent\emph{$I$-bundles over the Klein bottle.}  These are obtained as disk bundles of certain line bundles over the Klein bottle $K^2$. They are easily described as quotients of $\mathbb{R}^3$ under certain isometric  actions \cite{Wo}. Except for the trivial bundle $K^2\times I$, the rest are as follows:
\begin{enumerate}
\item[(i)] $K^2\widetilde{\times} I$: this is the disc bundle in the orientable three manifold obtained as quotient of $\mathbb{R}^3$ under the group generated by
\[
(x,y,z)\stackrel{\tilde{\tau}}{\to} (x+1,y,z), \qquad (x,y,z)\stackrel{\tilde{\sigma}}{\to} (-x, y+1,-z).
\]
Its boundary is given by a $2$-torus.
\item[(ii)] $K^2\hat{\times} I$: this is the disc bundle in the non-orientable three manifold obtained as quotient of $\mathbb{R}^3$ under the group generated by
\[
(x,y,z)\stackrel{\hat{\tau}}{\to} (x+1,y,-z), \qquad (x,y,z)\stackrel{\hat{\sigma}}{\to} (-x, y+1,-z).
\]
Its boundary is given by a Klein bottle.
\end{enumerate}

Observe that the correspondence sending $\tilde{\tau}\to\hat{\tau}^2$, 
$\tilde{\sigma}\to\hat{\sigma}$ induces an injective homomorphism from the fundamental group of $K^2\widetilde{\times} I$ into that of $K^2\hat{\times} I$, thus showing that $K^2\widetilde{\times} I$ is a twofold cover of $K^2\hat{\times} I$. Furthermore, since the 
fundamental group of $K^2$ is the dihedral group, and this group contains a unique subgroup of index 2, it follows that $K^2\widetilde{\times} I$ is the unique twofold cover of 
$K^2\hat{\times}I$.

We summarize the results of \cite{MitYam} in Tables \ref{TBL:2-DIM_LIMIT}, \ref{TBL:1-DIM_LIMIT} and \ref{TBL:0-DIM_LIMIT}, which contain the classifications of collapsing Alexandrov $3$-spaces with a $2$-, $1$- and $0$-dimensional limit space, respectively.


\begin{table}[h]
\centering
\caption{Two-dimensional limit}
\label{TBL:2-DIM_LIMIT}
\bgroup
\def\arraystretch{1.5}
\begin{tabular}{|c|c|c|}
\hline
\rowcolor[HTML]{EFEFEF} 
$\dim Y$              &\hspace{0.04cm} $\partial Y$     & Homeomorphism type of $X_i$                                                                                                     \\[.1cm] \hline
                      & $=\emptyset$     & $\seif(Y)$                                                                                                                      \\[.1cm] \cline{2-3} 
\multirow{-2}{*}{$2$} & $\neq \emptyset$ & $\seif(Y)\bigcup_{\partial Y}\left(\left(\bigsqcup_i GST_{N_i}\right) \bigsqcup \left( \bigsqcup_j GSKB_{N_j} \right) \right) $\\[.1cm]
\hline
\end{tabular}
\egroup
\end{table}


\begin{table}[h]
\centering
\caption{One-dimensional limit}
\label{TBL:1-DIM_LIMIT}
\bgroup
\def\arraystretch{1.5}
\begin{tabular}{|c|c|c|c|c|}
\hline
\rowcolor[HTML]{EFEFEF} 
\hspace{0.1cm}$\dim Y$               & $\partial Y$                        & \multicolumn{2}{c}{\cellcolor[HTML]{EFEFEF} \hspace{1.45cm} Homeomorphism type of $X_i$}                                                                  & \\ \hline
                       & $=\emptyset$                        & \multicolumn{3}{c|}{$F$-fiber bundle over $S^1$ with $F=\mathbb{S}^2$, $\mathbb{R}P^2$, $T^2$, $K^2$                     }                          \\ \cline{2-5} 
                       &                                     &                                               &     $\partial B$, $\partial B'$ & $B$, $B'$         \\   \cline{4-5} 
                       &                                     &                                               &                                                     & $D^3$,                                    \\ \cline{5-5} 
                       &                                     &                                               &                                                     & $\mathbb{R}P^3\setminus \mathrm{int}D^3$, \\ \cline{5-5} 
                       &                                     &                                               & \multirow{-3}{*}{$\mathbb{S}^2$}                    & $\bstwo$                                  \\ \cline{4-5} 
                       &                                     &                                               & $\mathbb{R}P^2$                                     & $K_1(\mathbb{R}P^2)$                      \\ \cline{4-5} 
                       &                                     &                                               &                                                     & $D^2\times S^1$,                          \\ \cline{5-5} 
                       &                                     &                                               &                                                     & $\mathrm{Mo}\times S^1$,                  \\ \cline{5-5} 
                       &                                     &                                               &                                                     & $K^2\tilde{\times} I$,                    \\ \cline{5-5} 
                       &                                     &                                               & \multirow{-4}{*}{$T^2$}                             & $\bsfour$                                 \\ \cline{4-5} 
                       &                                     &                                               &                                                     & $\mathbb{S}^1\tilde{\times} D^2$,                  \\ \cline{5-5} 
                       &                                     &                                               &                                                     & $K^2\hat{\times} I$,                      \\ \cline{5-5} 
                       &                                     &                                               &                                                     & $\bpt$,                                   \\ \cline{5-5} 
\multirow{-14}{*}{$1$} & \multirow{-13}{*}{$\neq \emptyset$} & \multirow{-13}{*}{$B\bigcup_{\partial B} B'$} & \multirow{-4}{*}{$K^2$}                             & $\bptwo$                                 \\
\hline
\end{tabular}
\egroup
\end{table}


\begin{table}[h]
\centering
\caption{Zero-dimensional space}
\label{TBL:0-DIM_LIMIT}
\bgroup
\def\arraystretch{1.5}
\begin{tabular}{|c|c|}
\hline
\rowcolor[HTML]{EFEFEF} 
$\dim Y$              & Homeomorphism type of $X_i$                                                           \\ \hline
                      & That of a space in Table \ref{TBL:2-DIM_LIMIT} with $\curv Y\geq 0$                \\ \cline{2-2} 
                      & That of a space in Table \ref{TBL:1-DIM_LIMIT}                                     \\ \cline{2-2} 
\multirow{-3}{*}{$0$} & That of a closed non-negatively curved Alexandrov space\\[-1.2ex]
& with finite fundamental group\\
\hline
\end{tabular}
\egroup
\end{table}


\subsection*{Local circle actions}

In this section we summarize the classification of closed Alexandrov spaces of dimension $3$ admitting a local isometric circle action obtained in \cite{GalNun}. This classification will be useful in subsequent sections.

Let $X$ be a closed Alexandrov $3$-space. A \textit{local circle action on $X$} is a decomposition of $X$ into (possibly
degenerate) disjoint, simple, closed curves called \textit{fibers}, each having a tubular neighborhood which admits an effective circle action whose orbits are the curves of the decomposition. The local circle action is \textit{isometric} if the circle actions on each tubular neighborhood of the fibers are isometric with respect to the restricted metric.

The different  fiber types of a local circle action on $X$ depend on the corresponding isotropy group, where one considers these fibers as orbits of the isometric circle action on a small tubular neighborhood around them. The possible fiber types are the following.  \emph{$F$-fibers} are topologically regular fixed-point fibers, while \emph{$SF$-fibers} are topologically singular fixed-point fibers. The fibers of type $E$ are those that correspond to $\mathbb{Z}_k$ isotropy, acting in such a way that local orientation is preserved. The fibers of type $SE$ correspond to $\mathbb{Z}_2$ isotropy, reversing the local orientation. Fibers that are not $F$-, $SF$-, $E$- or $SE$-fibers are called $R$-fibers. The strata of $F$-, $SF$-, $E$-, $SE$- and $R$-fibers are denoted, respectively, by $F$, $SF$, $E$, $SE$ and $R$. The fiber space, denoted by $X^*$, is a $2$-dimensional Alexandrov space, and therefore a topological $2$-manifold. Its boundary is composed of the images of $F$-, $SF$- and $SE$-fibers under the fiber projection map, while the interior of $X^*$ consists of $R$-fibers and a finite number of $E$-fibers.

A closed Alexandrov $3$-space with an isometric local circle action can be decomposed into \textit{building blocks} of types $F$, $SF$, $E$ and $SE$, which arise when considering small tubular neighborhoods of connected components of fibers of the corresponding type. A building block is called \textit{simple} if its boundary is homeomorphic to a torus, and \textit{twisted} if its boundary is homeomorphic to a Klein bottle (see \cite[Section 3]{GalNun}). The stratum of $R$-fibers is an $S^1$-fiber bundle with structure group $\mathrm{O}(2)$.

The set of isometric local circle actions is in one-to-one correspondence with the following set of equivariant invariants of the fiber space $X^*$:
\begin{equation*}
\left\{b; \varepsilon, g, (f,k_1), (t,k_2), (s,k_3); \{ (\alpha_i, \beta_i) \}_{i=1}^n; (r_1,r_2, \ldots, r_{s-k_{3}}); (q_1, q_2, \ldots, q_{k_3})\right\}.
\end{equation*}

The definition of these invariants is as follows. We let $(\varepsilon,k)$ be the pair that classifies the $S^1$-bundle of $R$-fibers according to \cite[Theorem 3.2]{GalNun}. We denote the genus of $X^{*}$ by $g\geq 0$.  The symbols $f, t, k_1, k_2$ are non-negative integers  such that $k_1 \leq f$ and $k_2\leq t$, where $k_1$ is the number of twisted $F$-blocks and $k_2$ is the number of twisted $SE$-blocks. Consequently $f-k_1$ is the number of simple $F$-blocks and $t-k_2$ is the number of simple $SE$-blocks. The number of $E$-fibers will be denoted by $n$ and we let $\{ (\alpha_i, \beta_i)\}_{i=1}^n$ be the corresponding Seifert invariants (see \cite[Section 1.7]{O}). We also let $b$ be an integer or an integer mod $2$ with the following conditions: $b=0$ if $f+t>0$ or if $\varepsilon\in\{o_2,n_1,n_3,n_4\}$ and some $\alpha_i=2$ (see \cite[Theorem 3.2]{GalNun} for the precise definitions of the $o_i$, $n_j$ and $\alpha_l$); $b\in\{0,1\}$ if $f+t=0$ and $\varepsilon\in\{ o_2,n_1,n_3,n_4\}$ and all $\alpha_i\neq 2$. In the remaining cases $b$ is an arbitrary integer. 
We let $s, k_3$ be non-negative integers, where  $k_3\leq s$ is the number of twisted $SF$-blocks. Hence $s-k_3$ is the number of simple $SF$-blocks, and we let $(r_1, r_2, \ldots, r_{s-k_{3}})$ and $(q_1, q_2, \ldots, q_{k_3} )$ be $(s-k_{3})$- and $k_3$-tuples of non-negative even integers corresponding to the number of topologically singular points in each  simple and twisted $SF$-block, respectively. The numbers $k$, $k_1$, $k_2$, and $k_3$ satisfy $k_1 + k_2 + k_3 = k$.

A closed Alexandrov space admitting an isometric local circle action given by the previous set of invariants can be decomposed as an equivariant connected sum. 

\begin{thm}[Theorem B \cite{GalNun} ]
\label{THM:INVARIANTS}
Let $X$ be a closed, connected Alexandrov $3$-space with $2r\geq 0$ topologically singular points.  Assume that $X$ admits the isometric local circle action given by the set of invariants
\[
\left\{b; \varepsilon, g, (f,k_1), (t,k_2), (s,k_3); \{ (\alpha_i, \beta_i) \}_{i=1}^n; (r_1,r_2, \ldots, r_{s-k_{3}}); (q_1, q_2, \ldots, q_{k_3})\right\}.
\]
Then $X$ is equivariantly homeomorphic to 
\[
M\#\underbrace{\Susp(\Real P^2)\# \cdots \# \Susp(\Real P^2)}_{r \text{ summands}},
\]
where $M$ is the closed $3$-manifold determined by the set of invariants 
\[
\left\{ b; \varepsilon, g, (f+s,k_1+k_3), (t,k_2); \{ (\alpha_i, \beta_i) \}_{i=1}^n \right\}
\]
\end{thm}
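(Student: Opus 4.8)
The strategy is to decompose $X$ into the building blocks described in \cite[Section~3]{GalNun} and reassemble them carefully, tracking the topologically singular points. Recall that each $SF$-fiber lies in an $SF$-building block whose small tubular neighborhood is (equivariantly) of the form $\bpt$ in the simple case and of the corresponding twisted form otherwise; from the topological description given after \cite[Example~2.60]{MitYam}, an $SF$-block containing $2m$ topologically singular points is obtained by stacking $m$ copies of the elementary $\bpt$-block along gluing discs, hence is homeomorphic (forgetting the circle action) to an $m$-fold ``boundary-connected-sum'' of copies of $K(\mathbb{R}P^2)\#K(\mathbb{R}P^2)$-type pieces. The first step is therefore to isolate each such singular point into its own $\bpt$-neighborhood, so that $X$ is presented as a union of a manifold part (carrying the $F$-, $E$-, $SE$- and $R$-fibers together with the simple $SF$-blocks which are circle bundles over discs away from the vertex) and $2r$ cone-like $\bpt$-pieces, two per singular pair.

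\textbf{Key steps.} First I would invoke the $\bpt$ description to replace each $SF$-block of type $N$ by $N$ standard $\bpt$-blocks glued along discs on their boundary Klein bottles; this does not change the equivariant homeomorphism type because, by the remark after \cite[Example~2.60]{MitYam}, the gluing is independent of the identifying homeomorphism. Second, I would observe that a single $\bpt$-block is equivariantly the ``cone'' piece whose effect, topologically, is to remove a ball and glue in a $\Susp(\mathbb{R}P^2)$ minus a ball: concretely, $\bpt = D^2\times\mathbb{S}^1/\alpha$ deformation-retracts, and doing surgery along the projection $p\colon\bpt\to K_1(\mathbb{S}^1)$ shows that attaching one $\bpt$ along a torus or Klein bottle boundary contributes exactly one connected summand $\Susp(\mathbb{R}P^2)$ to the total space while reattaching the complementary solid torus/Klein bottle — this is precisely the move that converts an $SF$-block into the corresponding $F$-block with the singular points absorbed. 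Third, after all $2r$ singular points have been pushed into $r$ pairs, each pair contributes one $\Susp(\mathbb{R}P^2)$ summand, and the manifold left over is built from exactly the same equivariant data with the $SF$-blocks promoted to $F$-blocks: a simple $SF$-block becomes a simple $F$-block and a twisted $SF$-block becomes a twisted $F$-block, which is why $(f,k_1)$ is replaced by $(f+s,k_1+k_3)$ while $(t,k_2)$, $\varepsilon$, $g$, $b$ and the Seifert pairs $\{(\alpha_i,\beta_i)\}$ are untouched. Fourth, I would appeal to the classification of closed Alexandrov $3$-manifolds with isometric local circle actions — equivalently the Seifert-type classification — to identify this leftover manifold as the $M$ determined by $\{b;\varepsilon,g,(f+s,k_1+k_3),(t,k_2);\{(\alpha_i,\beta_i)\}_{i=1}^n\}$, and to verify that the connected-sum description is in fact equivariant, i.e.\ the local circle actions match along the connect-sum spheres (which can be taken to be small invariant spheres around suitably chosen regular fibers).

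\textbf{Main obstacle.} The delicate point is the bookkeeping in the second and third steps: showing that each individual $\bpt$-block, when excised and its complement capped off, produces exactly one $\Susp(\mathbb{R}P^2)$ connected summand and simultaneously converts the relevant $SF$-block into the correctly-typed $F$-block, \emph{equivariantly}. One must check that twistedness is preserved (a twisted $SF$-block, with Klein-bottle boundary, yields a twisted $F$-block, contributing to $k_1+k_3$ rather than to the simple count) and that the invariant $b$ and the Euler-number-type data are unaffected by the surgery — this requires care because inserting $\Susp(\mathbb{R}P^2)$ could a priori interact with the mod-$2$ ambiguity in $b$. I expect this to be handled by working one pair of singular points at a time and using that $\Susp(\mathbb{R}P^2)$ carries its own circle action with two $SF$-fibers, so that the connect sum is performed along a regular fiber torus where the gluing is by the identity; the rest is then a matter of comparing invariant decompositions on the two sides, which the explicit normal forms of \cite{GalNun} make routine.
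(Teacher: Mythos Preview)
The paper does not prove this statement: it is quoted verbatim as Theorem~B of \cite{GalNun} and is used only as an input to the analysis in Section~\ref{S:TWO_DIM_LIMIT}. There is therefore no proof in the present paper against which to compare your proposal.

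That said, your outline is broadly in the spirit of the argument one expects in \cite{GalNun}: isolate the topologically singular points inside $\bpt$-type neighborhoods, observe that excising each such neighborhood and capping off converts an $SF$-block into an $F$-block of the same (simple/twisted) type, and recognise the excised piece as contributing a $\Susp(\mathbb{R}P^2)$ summand. One point you should tighten is the counting: a single $\bpt$-block already carries \emph{two} topologically singular points, so it contributes one $\Susp(\mathbb{R}P^2)$ summand, not one per singular point; your phrasing ``$2r$ cone-like $\bpt$-pieces, two per singular pair'' and then ``each pair contributes one $\Susp(\mathbb{R}P^2)$'' is internally inconsistent and should be rewritten so that each elementary $\bpt$ (two singular points) yields exactly one summand, giving $r$ summands in total. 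You should also be explicit that the equivariance of the connected sum is taken along an invariant $2$-sphere transverse to a regular $R$-fiber, and justify (rather than assert) that $b$ is unchanged; the cleanest way is to perform the surgery inside a single trivially fibered solid torus neighborhood of a regular fiber, where the obstruction class is visibly unaffected.
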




\section{Double branched covers of collapsing Alexandrov spaces}
\label{S:DBL_BRANCHED_COLAPSE}

Let $X$ be a finite-dimensional, compact, non-orientable Alexandrov $3$-space, $\mu$ its $3$-dimensional Hausdorff measure and $S_X$ its set of topologically singular points. We let $\tilde{X}$ be the  orientable double branched cover of $X$ equipped with $\tilde{\mu}$, its $3$-dimensional Hausdorff measure. Let $\pi\colon \tilde{X}\to X$ be the canonical projection and $\pi_*\tilde{\mu}$, the push-forward measure of $\mu$ with respect to $\pi$. 


\begin{prop}
\label{C:COLLAPSE_COVER}
Let $X$ be a non-orientable Alexandrov $3$-space. Then  $\pi_*\tilde{\mu}= 2\mu$. 

\end{prop}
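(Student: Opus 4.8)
The plan is to exploit that $\pi\colon\tilde{X}\to X$ is a two-to-one branched covering which is a genuine local isometry away from its finite branch locus, so that the pushforward identity reduces to two elementary facts: isometries preserve Hausdorff measure, and a finite set has vanishing $3$-dimensional Hausdorff measure.

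First I would set up the geometry of $\pi$. By the description of the orientable double branched cover recalled in Section~\ref{S:PRELIM} (see \cite[Lemmas~1.7 and~1.8]{GalGui} and \cite[Section~5]{GW}), $\tilde{X}$ carries the lifted inner metric, there is an isometric involution $\iota\colon\tilde{X}\to\tilde{X}$ with finite fixed-point set $\mathrm{Fix}(\iota)$, $X$ is the metric quotient $\tilde{X}/\iota$ with $\pi$ the quotient projection, and $\pi^{-1}(S_X)=\mathrm{Fix}(\iota)$. In particular, $d_X(\pi\tilde p,\pi\tilde q)=\min\{d_{\tilde{X}}(\tilde p,\tilde q),\,d_{\tilde{X}}(\tilde p,\iota\tilde q)\}$ for all $\tilde p,\tilde q\in\tilde{X}$. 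I would then show that $\pi$ is a $2$-to-$1$ local isometry over $X\setminus S_X$: given $\tilde x\notin\mathrm{Fix}(\iota)$, set $x=\pi(\tilde x)$ and $\delta=d_{\tilde{X}}(\tilde x,\iota\tilde x)>0$; for any $0<r<\delta/4$ one has, for all $\tilde y,\tilde z\in B(\tilde x,r)$,
\[
d_{\tilde{X}}(\tilde y,\tilde z)<2r<\delta-2r\le d_{\tilde{X}}(\tilde y,\iota\tilde z),
\]
using that $\iota$ is an isometry, whence $d_X(\pi\tilde y,\pi\tilde z)=d_{\tilde{X}}(\tilde y,\tilde z)$; together with the easy surjectivity onto $B(x,r)$ this shows $\pi|_{B(\tilde x,r)}$ is an isometry onto $B(x,r)$, and (taking $r<\delta/2$) that $B(x,r)$ is evenly covered by the two disjoint isometric sheets $B(\tilde x,r)$ and $\iota\big(B(\tilde x,r)\big)$.

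Finally I would assemble the measures. Since $S_X$ is finite and $\pi^{-1}(S_X)=\mathrm{Fix}(\iota)$ is finite, both sets are $\mathcal{H}^3$-null, so it suffices to prove $\tilde{\mu}(\pi^{-1}(A))=2\mu(A)$ for Borel $A\subseteq X\setminus S_X$. Using second countability, cover $X\setminus S_X$ by countably many evenly covered balls $U_j$ as above and disjointify them into Borel sets $V_j$ with $X\setminus S_X=\bigsqcup_j V_j$. For each $j$, the set $\pi^{-1}(A\cap V_j)$ is the disjoint union of two Borel sets, each mapped by $\pi$ isometrically onto $A\cap V_j$; as isometries preserve Hausdorff measure, $\tilde{\mu}(\pi^{-1}(A\cap V_j))=2\mu(A\cap V_j)$. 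Countable additivity then gives $\pi_*\tilde{\mu}(A)=\tilde{\mu}(\pi^{-1}(A))=\sum_j 2\mu(A\cap V_j)=2\mu(A)$. The only point requiring real care is the local-isometry claim — that the quotient metric on $X$ truly agrees with $d_{\tilde{X}}$ on small balls disjoint from their $\iota$-images — but this follows directly from the metric description of the double branched cover cited above, so I anticipate no substantial obstacle.
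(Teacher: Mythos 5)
Your proposal is correct and follows essentially the same route as the paper's proof: discard the finite (hence $\mathcal{H}^3$-null) singular set, observe that away from it $\pi$ is a two-sheeted covering whose sheets are isometric to evenly covered balls downstairs, and conclude by the definition of the push-forward. You merely make explicit two points the paper takes for granted — the even-covering/local-isometry claim via the quotient-metric formula, and the passage from small balls to all Borel sets by disjointification and countable additivity — so there is no substantive difference.
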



\begin{proof}
Let $X'=X\setminus S_X$ and observe that, since $S_X$ is a discrete subset of $X$, its $\mu$-measure is zero.
Therefore, it suffices to show that $\pi_*\tilde{\mu}(B)= 2\mu(B)$ for sufficiently small balls centered at points in $X'$. Observe that we may assume, without loss of generality, that each one of these balls is evenly covered. We have  that $\pi^{-1}(B)\subset \tilde{X}'$ is the disjoint union of two balls $B_1$, $B_2$ in $\tilde{X}'$ such that $\pi|_{B_i}\colon B_i\to B$ is an isometry. Therefore, $\tilde{\mu}(B_i)=\mu(B)$. The proposition now follows from the definition of the push-forward measure.
\end{proof}


\begin{cor}
\label{COR:COLLAPSE}
Let $\{X_i\}$ be a sequence of compact non-manifold Alexandrov $3$-spaces. Then the following hold:
\begin{itemize}
	\item[(i)]  The sequence $\{X_i\}$ converges (in the Gromov-Hausdorff metric) to an Alexandrov space $X_\infty$ if and only if the sequence of orientable branched double covers $\{\tilde{X}_i\}$, equipped with their canonical involutions $\sigma_i$, converges in the equivariant Gromov-Hausdorff topology to an Alexandrov space $\tilde{X}_\infty$ equipped with a limit involution $\sigma_\infty$ so that $X_\infty=\tilde{X}_\infty/\sigma_\infty$. 
	\item[(ii)] The sequence $\{X_i\}$ collapses if and only if the sequence of orientable branched double covers $\{\tilde{X}_i\}$ collapses.
\end{itemize} 
\end{cor}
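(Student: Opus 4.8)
The plan is to prove both statements by exploiting the relationship between the metric on $X_i$ and the lifted metric on $\tilde{X}_i$, together with Proposition~\ref{C:COLLAPSE_COVER} relating the Hausdorff measures. For part (i), the key observation is that the projection $\pi_i\colon \tilde{X}_i\to X_i$ is $1$-Lipschitz and, away from the (discrete, hence measure-zero and in fact at bounded distance from each point controlled) set of topologically singular points, a local isometry; moreover $X_i$ carries the quotient metric $\tilde{X}_i/\sigma_i$. One direction is essentially formal: if $(\tilde{X}_i,\sigma_i)\to(\tilde{X}_\infty,\sigma_\infty)$ in the equivariant Gromov--Hausdorff sense, then passing to quotients is continuous for this topology (this is standard; see for instance the discussion of equivariant Gromov--Hausdorff convergence in Fukaya--Yamaguchi and its use in \cite{MitYam}), so $X_i=\tilde{X}_i/\sigma_i\to\tilde{X}_\infty/\sigma_\infty=X_\infty$. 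For the converse, suppose $X_i\to X_\infty$. Since each $\tilde{X}_i$ has $\curv\ge k$, the same fixed lower curvature bound as $X_i$, and $\diam\tilde{X}_i\le 2\diam X_i$ is uniformly bounded, Gromov's precompactness theorem applied to the pairs $(\tilde{X}_i,\sigma_i)$ yields a subsequential equivariant limit $(\tilde{X}_\infty,\sigma_\infty)$; by the already-proven direction its quotient is the Gromov--Hausdorff limit of the $X_i$, which is $X_\infty$, so $X_\infty=\tilde{X}_\infty/\sigma_\infty$. A subsequence argument then upgrades this to convergence of the full sequence.

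For part (ii), I would deduce it from part (i) by controlling dimensions. Recall that for Alexandrov spaces with a fixed lower curvature bound, convergence without collapse is equivalent to the dimension of the limit equaling the common dimension of the sequence; equivalently, collapse is detected by $\mathcal{H}^3$-measure vanishing in the limit (the limit has Hausdorff dimension $<3$ iff its $3$-dimensional Hausdorff measure is zero, and measures converge appropriately under non-collapsed convergence). So suppose $\{X_i\}$ collapses, i.e., after passing to a subsequence $X_i\to Y$ with $\dim Y<3$. By part (i), the corresponding equivariant limit $\tilde{X}_\infty$ satisfies $Y=\tilde{X}_\infty/\sigma_\infty$; since $\pi_\infty\colon\tilde{X}_\infty\to Y$ is surjective, at most two-to-one, and $1$-Lipschitz, and since a surjective finite-to-one Lipschitz image cannot increase Hausdorff dimension, we get $\dim\tilde{X}_\infty\le\dim Y<3$, so $\{\tilde{X}_i\}$ collapses. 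Conversely, if $\{\tilde{X}_i\}$ collapses, then along a subsequence $\tilde{X}_i\to\tilde{X}_\infty$ with $\dim\tilde{X}_\infty<3$; by Proposition~\ref{C:COLLAPSE_COVER} and the continuity of Hausdorff measure under non-collapsed convergence — or more simply, because $\vol X_i=\tfrac12\vol\tilde{X}_i\to 0$ (the latter tending to $0$ precisely because $\{\tilde{X}_i\}$ collapses) — we conclude $\{X_i\}$ collapses as well.

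I expect the main obstacle to be the converse direction of part (i): producing the equivariant limit and checking that it is honest, i.e., that $\sigma_\infty$ is a well-defined isometric involution with $\tilde{X}_\infty/\sigma_\infty=X_\infty$ rather than merely a $1$-Lipschitz map, and that the branched-cover structure (in particular the behavior at topologically singular points, where $\sigma_i$ has isolated fixed points) passes to the limit in a usable way. This requires care because the singular set could, a priori, degenerate in the limit; but since the topologically singular points of $X_i$ are exactly the images of the fixed points of $\sigma_i$ and these are isolated, one controls them via the equivariant convergence, and the identity $\pi_*\tilde\mu=2\mu$ of Proposition~\ref{C:COLLAPSE_COVER} guarantees no volume is lost or created in passing between the two towers. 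The remaining steps — precompactness, $1$-Lipschitz estimates, the dimension/measure dictionary for Alexandrov spaces — are standard and I would only cite them.
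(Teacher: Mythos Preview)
Your outline is essentially the paper's proof. For part~(i) you invoke precompactness of $(\tilde X_i,\sigma_i)$ (via the diameter bound $\diam\tilde X_i\le 2\diam X_i$, where the paper uses an equivalent $\varepsilon$-net lifting argument) together with Fukaya--Yamaguchi's equivariant convergence theory and the continuity of quotients under equivariant Gromov--Hausdorff convergence; this is exactly what the paper does, citing Fukaya's inequality $d_{GH}(\tilde X_i/\alpha_i,\tilde X_\infty/\alpha_\infty)\le C\,d_{eGH}((\tilde X_i,\mathbb{Z}_2),(\tilde X_\infty,\mathbb{Z}_2))^{1/3}$ and \cite[Proposition~3-6]{FY}. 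For part~(ii), your volume argument ($\vol X_i=\tfrac12\vol\tilde X_i\to 0$) via Proposition~\ref{C:COLLAPSE_COVER} is precisely the paper's one-line proof.

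One slip to fix: in your dimension argument for part~(ii), the Lipschitz inequality runs the wrong way. A $1$-Lipschitz surjection can only \emph{decrease} Hausdorff dimension, so from $\pi_\infty\colon\tilde X_\infty\to Y$ being $1$-Lipschitz you obtain $\dim Y\le\dim\tilde X_\infty$, not $\dim\tilde X_\infty\le\dim Y$ as you wrote; the finite-to-one hypothesis does not reverse this. The inequality you actually need follows from the fact that $\pi_\infty$ is a submetry (quotient by an isometric $\mathbb{Z}_2$-action), hence dimension-preserving. Alternatively, simply drop this strand and keep only the volume argument, which is both correct and what the paper does.
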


\begin{proof}
We first prove part (i). 
Let $\{(X_i, d_i)\}$ be a convergent sequence of compact non-manifold Alexandrov $3$-spaces. For simplicity, we let $X_i=(X_i,d_i)$ and we let $X_\infty$ be the limit of the sequence. We have an induced sequence of  metrics $\tilde{d}_i$ on $\tilde{X}_i$. Let $\alpha_i\colon\tilde{X}_i\rightarrow \tilde{X}_i$ be the isometric involution so that $\tilde{X}_i/\alpha_i$ is isometric to $X_i$. Observe that the lift of any $\varepsilon$-net in $X_i$ is an $\varepsilon$-net in $\tilde{X}_i$ that is invariant under the involution $\sigma_i$. This implies that the sequence $\{\tilde{X}_i\}$ has a limit, which we denote by $\tilde{X}_\infty$.
We define $\alpha_{\infty}\colon\tilde{X}_{\infty}\to\tilde{X}_{\infty}$ as the limit of the $\alpha_i$. Now, by \cite[Theorem 2-1]{F}, for every $i$, we have

\begin{equation}
\label{equivariant_vs_normal}
d_{GH}(\tilde{X}_i/\alpha_i, \tilde{X}_{\infty}/\alpha_{\infty})\leq C\left( d_{eGH} ((\tilde{X}_i, \mathbb{Z}_2),(\tilde{X}_{\infty},\mathbb{Z}_2)) \right)^{1/3},
\end{equation}
where $d_{eGH}$ stands for the equivariant Gromov-Hausdorff distance. 
By \cite[Proposition 3-6]{FY}, $\tilde{X}_i \GHto \tilde{X}_{\infty}$ implies that there is a subsequence $\{\tilde{X}_{i_k}\}$ converging in the equivariant Gromov-Hausdorff topology. Furthermore, by the proof of the same proposition, the group acting on the limit is the inverse limit of the groups acting on the members of the subsequence, which in this case are all $\mathbb{Z}_2$. Therefore, $(\tilde{X}_{i_k},\mathbb{Z}_2)\eGHto(\tilde{X}_{\infty},\mathbb{Z}_2)$. 

Now, inequality (\ref{equivariant_vs_normal}) implies that $\tilde{X}/\alpha_i\GHto\tilde{X}_{\infty}/\alpha_{\infty}$. However, since $\tilde{X}/\alpha_i$ is isometric to $X_i$, we have that $X_i\GHto\tilde{X}_{\infty}/\alpha_{\infty}$. Since we already had that $X_i\GHto X_{\infty}$, we conclude that $X_\infty$ is isometric to $\tilde{X}_{\infty}/\alpha_{\infty}$. 
Observe that the action of $\alpha_{\infty}$ on $\tilde{X}_{\infty}$ might be trivial. This shows that, if $\{X_i\}$ GH-converges to $X_\infty$, then $\{\tilde{X}\}_i$ converges in the equivariant Gromov-Hausdorff distance to $\tilde{X}_\infty$ with an involution $\sigma_\infty$ so that $\tilde{X}_\infty/\sigma_\infty$ is isometric to $X_\infty$. 

On the other hand, equation \eqref{equivariant_vs_normal} implies that if $\{\tilde{X}_i\}$, equipped with their canonical involutions $\sigma_i$ converges in the equivariant Gromov-Hausdorff distance to $\tilde{X}_\infty$ with a limit involution $\sigma_\infty$, then $\{X_i\}$ converges to $X_\infty$ in the Gromov-Hausdorff distance. This shows part (i) of the corollary.
Part (ii) follows from part (i) and Proposition~\ref{C:COLLAPSE_COVER}. 
\end{proof}


\begin{cor}
Let $M$ be a closed Riemannian manifold with positive minimal volume admitting an isometric involution $\iota:M\to M$ with a discrete set of fixed points. Then the Alexandrov space $M/\iota$ has positive minimal volume. 
\end{cor}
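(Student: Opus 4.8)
The plan is to reduce the statement to Corollary~\ref{COR:COLLAPSE}(ii), after identifying $M$ with the orientable double branched cover of $M/\iota$. Throughout I read ``positive minimal volume'' as the property that the space does not collapse, i.e.\ that it carries no sequence of Alexandrov metrics with $\curv\geq -1$, uniformly bounded diameter, and volume tending to $0$; for a closed Riemannian $3$-manifold this agrees with the usual $\mathrm{MinVol}>0$ (compare the discussion around Remark~\ref{R:HYP_GEOM_NOT} and \cite{ShiYam,MitYam}).

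First I would unpack the geometry of $X:=M/\iota$. Being an isometric quotient of a closed Riemannian $3$-manifold, $X$ is a closed Alexandrov $3$-space with the same lower curvature bound as $M$; after rescaling we may take $\curv X\geq -1$. If $\iota$ acts freely, then $X$ is a closed $3$-manifold and $\pi\colon M\to X$ is a Riemannian double covering, and I would settle this case directly, since a finite covering of a collapsing closed $3$-manifold collapses (pull back a collapsing sequence; or note that in dimension $3$ a collapsing $X$ is a graph manifold, hence so is its cover $M$). If $\iota$ has fixed points, then at each fixed point $p$ the linear involution $d\iota_p$ of $T_pM$ has trivial $+1$-eigenspace (otherwise $p$ is not isolated in the fixed set), so $d\iota_p=-\mathrm{id}$; thus $\iota$ is orientation-reversing, and assuming $M$ orientable (the relevant case, implicit in speaking of the Alexandrov space $M/\iota$ within the present framework) the image of $p$ has space of directions $\mathbb{S}^2/(-\mathrm{id})\cong \mathbb{R}P^2$. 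Hence $X$ is a closed \emph{non-manifold} Alexandrov $3$-space realized as the quotient of the closed orientable $3$-manifold $M$ by an orientation-reversing involution with only isolated fixed points, so by the canonicity of the orientable double branched cover (cf.\ the Preliminaries and \cite{GalGui}) $M$ is homeomorphic to $\tilde X$.

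With this in hand the conclusion is immediate: suppose, for contradiction, that $X$ does not have positive minimal volume, so that $X$ collapses through a sequence $(X,d_i)$ of Alexandrov metrics; applying Corollary~\ref{COR:COLLAPSE}(ii) to this sequence, its orientable double branched cover $\tilde X\cong M$ also collapses, contradicting the assumption that $M$ has positive minimal volume. Hence $X=M/\iota$ has positive minimal volume. I expect the two points requiring care to be, first, matching the chosen meaning of ``positive minimal volume'' with the precise notion of collapse used in Corollary~\ref{COR:COLLAPSE}, so that the corollary applies verbatim; and, second, the homeomorphism $\tilde X\cong M$, which rests on the fixed-point computation forcing $\iota$ to reverse orientation together with the uniqueness of the orientable double branched cover. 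All of the substantive geometry is packaged inside Corollary~\ref{COR:COLLAPSE}, hence ultimately in Proposition~\ref{C:COLLAPSE_COVER} and the equivariant Gromov--Hausdorff estimates of \cite{F,FY}; by contrast, the free-action case is routine.
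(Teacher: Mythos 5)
Your proposal is correct and is essentially the intended argument: the paper states this corollary without proof, immediately after Corollary~\ref{COR:COLLAPSE}, precisely because it follows by identifying $M$ with the orientable double branched cover of $M/\iota$ and applying part (ii) (via Proposition~\ref{C:COLLAPSE_COVER}), exactly as you do. Your additional care with the free case, the orientability of $M$, and the matching of ``positive minimal volume'' with the paper's notion of non-collapse only makes explicit what the paper leaves implicit.
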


\section{Outline of the proof of Theorem \ref{THM:MAIN_THEOREM}}
\label{S:OUTLINE_OF_PROOF}

We proceed by contradiction. Suppose the result does not hold. Then we have a sequence of closed, irreducible Alexandrov $3$-spaces $\{X_i\}_{i=1}^{\infty}$ with $\curv X_i\geq -1$ and $\diam X_i\leq D$ such that $\vol X_i \to 0$ and $X_i$ is not geometric. Therefore, passing to a subsequence if necessary, we can assume that $\{X_i\}$ collapses to a compact Alexandrov space $Y$. We will separate our analysis in three cases according to the dimension of $Y$. In Sections \ref{S:TWO_DIM_LIMIT}, \ref{S:ONE_DIM_LIMIT} and \ref{S:ZERO_DIM_LIMIT} we will address the cases in which $Y$ has dimension $2$, $1$ and $0$ respectively. In every case, we conclude that $X_i$ is geometric for a sufficiently big $i$, obtaining a contradiction.

 The following Lemma will be useful in our analysis. We refer the reader to \cite{Thur} for the definitions and basic properties of orbifold-covering spaces.


\begin{lem} Let $X$ be an Alexandrov $3$-space which is not a manifold. Then $X$ is geometric if and only if its orientable branched double cover $\tilde{X}$ is geometric.
\end{lem}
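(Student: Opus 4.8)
The plan is to exploit the two structures available on a non-manifold Alexandrov $3$-space $X$: its orientable branched double cover $\tilde X$, which is a closed orientable topological $3$-manifold, and the orientation-reversing involution $\iota\colon\tilde X\to\tilde X$ with isolated fixed points such that $X=\tilde X/\iota$, together with the fact (recalled in Section~\ref{S:PRELIM}) that $\iota$ may be taken smooth and isometric after lifting the metric. The statement to prove is a biconditional, so I would prove the two implications separately.

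First I would handle the easy direction: if $X$ is geometric, then $\tilde X$ is geometric. By definition $X=\mathbb{G}/\Gamma$ where $\mathbb{G}$ is one of the eight Thurston geometries and $\Gamma$ is a cocompact lattice acting on $\mathbb{G}$, now allowed to act with isolated fixed points (reflecting the topological singularities of $X$). The point is that the orientation double branched cover is intrinsic to $X$: it is determined by the local structure near the singular points (cones over $\mathbb{R}P^2$). One then shows $\tilde X = \mathbb{G}/\Gamma'$ where $\Gamma'\subset\Gamma$ is the index-two subgroup acting freely and orientation-preservingly — concretely, $\Gamma'$ is the kernel of the composite $\Gamma\to \mathrm{Isom}(\mathbb{G})\to \mathbb{Z}_2$ recording the orientation behaviour — so that $\tilde X$ carries the same geometric structure $\mathbb{G}$ and $\iota$ is realized by the generator of $\Gamma/\Gamma'$. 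Here one uses that the eight model geometries admit no compact quotients with finite fundamental group in a way that would obstruct this (the only subtlety, $\mathbb{S}^3$, is harmless). This direction is essentially Seifert–Thurston orbifold geometrization for the very special orbifolds arising here.

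For the converse — if $\tilde X$ is geometric, then $X$ is geometric — the idea is to transport the geometric structure through the involution. Since $\tilde X$ admits a Thurston geometry and $\iota$ is a smooth involution with isolated fixed points, I would invoke the geometrization of finite group actions on geometric $3$-manifolds (this is where classical results on involutions on $3$-manifolds, such as \cite{KimToll,Kwun}, and the equivariant geometrization/Smith-conjecture-type results enter): a finite-order diffeomorphism of a closed geometric $3$-manifold is conjugate to an isometry for some geometric structure in the same class. Averaging, or passing to such an isometric model, realizes $\iota$ as an element of $\mathrm{Isom}(\mathbb{G})$ acting on $\tilde X=\mathbb{G}/\Gamma'$; then $X=\tilde X/\iota=\mathbb{G}/\Gamma$ with $\Gamma=\langle\Gamma',\iota\rangle$ a cocompact lattice in $\mathrm{Isom}(\mathbb{G})$ (cocompactness and discreteness being inherited since $\Gamma'$ has finite index in $\Gamma$). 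This exhibits $X$ as a geometric Alexandrov $3$-space. I would also remark that $\mathbb{H}^3$ cannot occur on either side: a hyperbolic $\tilde X$ has nonzero simplicial volume, which descends nontrivially to $X$, so this case is vacuous in the intended application — though for the clean biconditional it is enough that the argument is uniform across the geometries.

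The main obstacle I anticipate is the converse direction, specifically making rigorous the passage from ``$\iota$ is a smooth involution of a geometric $3$-manifold'' to ``$\iota$ is conjugate to an isometry of a geometric structure in the same class.'' For the Seifert-type geometries ($\mathbb{S}^3$, $\mathbb{R}^3$, $\mathbb{S}^2\times\mathbb{R}$, $\mathbb{H}^2\times\mathbb{R}$, $\SL$, $\Nil$) this follows from the classical equivariant theory of Seifert fiberings and involutions, but for $\Sol$ one must argue slightly differently (e.g. using that $\Sol$-manifolds are torus or Klein-bottle bundles over $S^1$ or $I$-bundles and analyzing involutions of these directly). A secondary technical point is checking that the fixed-point set of the isometric model for $\iota$ is still isolated (so that $X$ remains a genuine non-manifold Alexandrov space of the expected type), which follows because the fixed-point structure is a topological invariant of the conjugacy class of the involution. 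I would organize the proof so that these case-by-case verifications are reduced to citing \cite{KimToll,Kwun} and the orbifold geometrization theorem, keeping the geometric-lattice bookkeeping ($\Gamma'\trianglelefteq\Gamma$ of index two) as the unifying thread.
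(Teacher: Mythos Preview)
Your forward direction (passing to the index-two orientation-preserving subgroup $\Gamma'\subset\Gamma$) is the paper's argument, rephrased: the paper says that $M^3\to X$ is the universal orbifold cover and $\tilde X\to X$ is an intermediate orbifold cover, hence $\tilde X = M^3/\tilde G$ for a subgroup $\tilde G$ of the deck group. Same content, different language.

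For the converse the paper takes a much shorter route than you do. It simply observes that $M^3\to\tilde X\to X$ is a composite of orbifold covers, so $M^3$ is also the universal orbifold cover of $X$, whence $X = M^3/G$ for the deck group $G$, and then asserts that ``we can put on $X$ the quotient metric on $M^3/G$''. No appeal to equivariant geometrization, no case analysis over the geometries, no discussion of involutions on Seifert or $\Sol$ manifolds. However, you have correctly identified the delicate point: the paper's argument does not explain why the deck group $G$ acts on $M^3$ by \emph{isometries} of the Thurston metric --- equivalently, why the involution $\iota$ is isometric for the geometric structure on $\tilde X$ rather than merely for the lifted Alexandrov metric. Your route via equivariant geometrization of finite group actions (the relevant reference in this paper's bibliography is \cite{DinkLeeb}; the papers \cite{KimToll,Kwun} concern splitting PL involutions of connected sums and do not supply this) addresses exactly this step, at the cost of substantially heavier machinery. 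So your argument is more careful but less elementary; the paper's is shorter and uniform across both implications, but leaves implicit precisely the obstacle you flagged.
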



\begin{proof}
We assume first that $X$ is geometric. Then, there exists a Thurston geometry $M^3$ such that $X$ is isometric to $M^3/G$, where $G$ is a co-compact lattice. In particular, $M^3$ is an orbifold-covering space of $X$ and $X$ is a good orbifold. Therefore, $X$ admits a Riemannian orbifold metric and we will henceforth assume that it has such a metric. Moreover, $M^3$ is the universal orbifold-covering space of $X$. 

Now we note that the canonical projection $\pi\colon\tilde{X}\rightarrow X$ is an orbifold-covering map. Then, by the universality of $M^3$, there exists an orbifold-covering map $M^3\rightarrow \tilde{X}$. Therefore, there exists a subgroup $\tilde{G}$ of the group of deck transformations  such that $\tilde{X}$ is isometric to $M^3/\tilde{G}$. Furthermore, the metric on $M^3$ is the lift of the metric on $X$ and it coincides with the lift of the metric on $\tilde{X}$. Therefore, $\tilde{X}$ is geometric. 

Assume now that $\tilde{X}$ is geometric, i.e.\ it is isometric to $M^3/\tilde{G}$, where $M^3$ is a Thurston geometry and $\tilde{G}$ a co-compact lattice. Then, as in the previous case, $M^3$ is the universal orbifold-covering space of $\tilde{X}$. Moreover, the canonical projection $\tilde{X}\rightarrow X$ is an orbifold-covering map. Therefore, $M^3$ covers $X$  with deck transformation group $G$  and it follows that we can put on $X$ the quotient metric on $M^3/G$. Thus we conclude that $X$ is geometric.
\end{proof}

\noindent \textbf{Proof of Remark~\ref{R:HYP_GEOM_NOT}.} To conclude this section, we explain why a closed collapsing Alexandrov $3$-space cannot admit hyperbolic geometry. Let $X$ be a closed, collapsing Alexandrov $3$-space and suppose that $X$ admits hyperbolic geometry. We have two cases to consider: either $X$ is a $3$-manifold or $X$ has topological singularities. 

Suppose first that $X$ is a $3$-manifold. Then, by \cite[Corollary 1.7]{MitYam2} (after passing via the orientable double cover of $X$ if necessary), the simplicial volume $\|X\|$ of $X$ is zero. On the other hand, since $X$ admits a hyperbolic Riemannian metric, by \cite[Theorem 6.2]{Thurston1978} (after passing via the orientable double cover of $X$ if necessary), the simplicial volume of $X$ is non-zero, which is a contradiction. 

Suppose now that $X$ is not a $3$-manifold. Then its orientable double branched cover $\tilde{X}$ is an orientable topological $3$-manifold. By  Corollary~\ref{COR:COLLAPSE}, $\tilde{X}$ admits a sequence of collapsing Alexandrov metrics. Hence, by {\cite[Corollary 1.7]{MitYam2}, the simplicial volume of $\tilde{X}$ is zero. Now, since $X$ admits hyperbolic geometry, $\tilde{X}$ is also hyperbolic, so its simplicial volume cannot be zero, which is a contradiction. \hfill $\square$


\section{Two-dimensional limit space}
\label{S:TWO_DIM_LIMIT}

In this section we deal with the case that $\{X_i\}_{i=1}^{\infty}$ collapses to $Y$ with $\dim Y=2$. In this case the homeomorphism type of $X_i$, for sufficiently big $i$, is described in Table \ref{TBL:2-DIM_LIMIT}. We begin by stating a technical lemma which we will need throughout this section. Let us denote the connected sum of $m\geq 0$ copies of a space $X$ by $\csum_m X$.

\begin{lem}
\label{L:IRREDUCIBLE_IMPLIES_PRIME}
Let $M$ be a closed $3$-manifold and let $X$ be an Alexandrov $3$-space homeomorphic to $M\#\left(\csum_r \Susp(\mathbb{R}P^2)\right)$, for $r\geq 0$. If every embedded $2$-sphere bounds a $3$-ball,  then one of the following holds:
\begin{itemize}
\item[(i)] $M$ is prime and $X$ is homeomorphic to $M$,
\item[(ii)] $X$ is homeomorphic to $\Susp(\mathbb{R}P^2)$.
\end{itemize}
\end{lem}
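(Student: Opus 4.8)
The plan is to analyze the connected-sum decomposition $X \cong M \#\left(\csum_r \Susp(\mathbb{R}P^2)\right)$ using the hypothesis that every embedded $2$-sphere in $X$ bounds a $3$-ball, together with the standard facts about connected sums of $3$-manifolds and the topological structure of $\Susp(\mathbb{R}P^2)$. The key observations are: first, $\Susp(\mathbb{R}P^2)$ is itself a non-manifold Alexandrov $3$-space with exactly two topologically singular points (the two suspension vertices), and removing open cone neighborhoods of these vertices leaves $\mathbb{R}P^2 \times [0,1]$; and second, each connected-sum operation with $\Susp(\mathbb{R}P^2)$ introduces a separating $2$-sphere into $X$. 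The irreducibility-type hypothesis will then force these spheres to bound balls, which severely constrains how many summands can actually be present.

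The main steps, in order, are as follows. First I would dispose of the trivial cases: if $r=0$ then $X \cong M$ and the sphere condition says precisely that $M$ is irreducible, hence prime (invoking the classical fact cited in the introduction, that for $3$-manifolds not homeomorphic to $S^3$ or $S^1\times S^2$ irreducibility coincides with primeness, and checking those two exceptional manifolds are already prime), giving conclusion (i). Second, suppose $r \geq 1$. Consider the $2$-sphere $S$ arising in the connected sum that splits off one $\Susp(\mathbb{R}P^2)$ summand; by hypothesis $S$ bounds a $3$-ball $B$ in $X$. The point is that $B$ must lie on the $\Susp(\mathbb{R}P^2)$ side rather than swallowing the rest of $X$: since $\Susp(\mathbb{R}P^2)$ minus an open ball is not a ball (it still contains topologically singular points, and in fact is homeomorphic to the once-punctured cone, i.e. a cone on $\mathbb{R}P^2$ up to the obvious identification), the ball $B$ cannot be the complement of a standardly-embedded ball on the $M \# (\csum_{r-1}\Susp(\mathbb{R}P^2))$ side unless that side is itself a $3$-ball — i.e. unless $M \cong S^3$ and $r = 1$. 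More carefully, connect-summing a ball-bounding sphere trivializes the summand it was meant to separate, so the sphere condition propagates: I would argue inductively that every such separating sphere forces $M \# (\csum_{r-1}\Susp(\mathbb{R}P^2)) \cong S^3$, leaving $X \cong \Susp(\mathbb{R}P^2)$ and conclusion (ii).

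The cleanest way to organize this is probably to argue directly about singular points and the structure theorem: $X$ has exactly $2r$ topologically singular points, and if $X$ is to satisfy the sphere condition while having $2r \geq 2$ of them, one shows that the only possibility is the "minimal" configuration $\Susp(\mathbb{R}P^2)$, because any embedded $2$-sphere separating one topologically singular point from another, or separating a singular point from a non-trivial manifold piece, would bound a region that is not a ball. One must also handle the case $M$ non-trivial with $r\geq 1$: here the standard sphere exhibiting the connected sum separates a punctured $M$ from a punctured $\csum_r\Susp(\mathbb{R}P^2)$, and neither side is a ball (a punctured non-trivial prime piece is never a ball, and a punctured suspension is never a ball), contradicting the hypothesis unless $M$ is trivial.

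\textbf{Expected main obstacle.} The delicate point is making rigorous the claim that the complement of a $3$-ball (or the "punctured" version) of $\Susp(\mathbb{R}P^2)$, and more generally of a connected sum involving it, is never a $3$-ball — in other words, that the suspension points genuinely obstruct the region bounded by a separating sphere from being a ball. This requires being careful about where embedded spheres can sit relative to the topologically singular locus (they must avoid the singular points, since a small sphere around a singular point of $\Susp(\mathbb{R}P^2)$ is an $\mathbb{R}P^2$, not an $S^2$), and about the precise definition of connected sum for these non-manifold Alexandrov spaces. I would address this by working in the complement of small cone neighborhoods of the singular points, where everything reduces to statements about genuine $3$-manifolds with $\mathbb{R}P^2$ boundary components, and then invoking the classical irreducibility theory for $3$-manifolds.
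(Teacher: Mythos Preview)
Your proposal is correct and follows essentially the same approach as the paper's proof: split off the case $r=0$ (classical), then for $r\geq 1$ use the connecting sphere $S$ between $M$ and $Y=\csum_r\Susp(\mathbb{R}P^2)$, observe that the side containing topologically singular points cannot be a $3$-ball, so $M\cong\mathbb{S}^3$, and then iterate on the remaining $\Susp(\mathbb{R}P^2)$ summands to force $r=1$. Your ``expected main obstacle'' is not actually delicate: a $3$-ball is a topological manifold, so any region containing a topologically singular point (where the space of directions is $\mathbb{R}P^2$) is automatically not a $3$-ball --- the paper's proof uses exactly this observation in one line, without passing to the manifold complement of cone neighborhoods.
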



\begin{proof}
If $X$ is a topological manifold then $r=0$ and the result is the classical fact that an irreducible $3$-manifold is prime. Therefore we assume that the set of topologically singular points of $X$ is non-empty. Let us denote $Y=\csum_r \Susp(\mathbb{R}P^2)$. Now, let $S\subset M\# Y$ be the $2$-sphere which resulted from the connected sum operation. By hypothesis $S$ must bound a $3$-ball. Since $Y$ contains topologically singular points, this implies that $M$ is homeomorphic to $\mathbb{S}^3$. Hence, $X$ is homeomorphic to $Y$. Now, let us denote $Y_{r-1}= \csum_{r-1} \Susp(\mathbb{R}P^2)$ so that $X\cong \Susp(\mathbb{R}P^2)\# Y_{r-1}$. Let $\bar{S}\subset \Susp(\mathbb{R}P^2)\# Y_{r-1}$ be the $2$-sphere arising from the connected sum operation. Then, $\bar{S}$ must bound a $3$-ball. As in the previous case, the presence of topologically singular points implies that this is only possible if $r-1=0$, i.e., if $X\cong \Susp(\mathbb{R}P^2)$.
\end{proof}

We divide the following analysis in two cases depending whether $X_i$ has fibers of type $\bpt$ or not.


\subsection*{$X_i$ does not contain fibers of type $\bpt$} 
 Under the assumption of no fibers of type $\bpt$, $X_i$ is homeomorphic to a usual Seifert fibered space (possibly with attached generalized solid tori and generalized Klein bottles). Then by  \cite[Corollary 6.2]{GalNun} $X_i$ is homeomorphic to an Alexandrov space admitting an isometric and effective, local circle action. Therefore, by \cite[Theorem B]{GalNun} (cf. Section \ref{S:PRELIM}), the aforementioned isometric local circle action is given by the set of invariants 
\[
\{b; \varepsilon,g,(f,k_1),(t,k_2),(s,k_3); \{(\alpha_i,\beta_i)\}_{i=1}^{n};(r_1, \ldots, r_{s-k_3});(q_1, q_2, \ldots, q_{k_3} )\}.
\]
In the following we exhaust every possible value of the invariants to conclude that, in each case, $X_i$ is geometric. For simplicity we will use the notation $k:=k_1+k_2+k_3$.  

\begin{case}[$k=0$, $f>0$]
 The topological decomposition of Theorem \ref{THM:INVARIANTS} applied to this case yields that, depending on the values of the invariants, $X_i$ is homeomorphic to either 
\[
\mathbb{S}^3\#
\left(\csum_{\varphi}\mathbb{S}^2\times \mathbb{S}^1 \right)\#
\left(\csum_t\mathbb{R}P^2\times \mathbb{S}^1 \right)\#
\left(\csum_{j=1}^n L(\alpha_j,\beta_j)\right)\#
\left( \csum_{r} \mathrm{Susp}(\mathbb{R}P^2) \right),
\]
or 
\[
\mathbb{S}^2\tilde{\times}\mathbb{S}^1\#
\left(\csum_{\varphi}\mathbb{S}^2\times \mathbb{S}^1 \right)\#
\left(\csum_{j=1}^n L(\alpha_j,\beta_j)\right)\#
\left( \csum_{r} \mathrm{Susp}(\mathbb{R}P^2) \right),
\]
where the value of $\varphi$ depends on $f$, $g$, $\varepsilon$ and $t$. The exact value of $\varphi$ does not play a role in our arguments (see \cite{GalNun}, Raymond, Fintushel for the precise definition).
Therefore,  since $X_i$ is irreducible, Lemma \ref{L:IRREDUCIBLE_IMPLIES_PRIME} implies that only one connected summand of the previous equivariant connected sum decompositions can appear. We observe that the possible connected summands are all geometric. More explicitly, $\mathbb{S}^3$, lens spaces and $\mathrm{Susp}(\mathbb{R}P^2)$ admit $\mathbb{S}^3$-geometry while $\mathbb{S}^2\times \mathbb{S}^1$, $\mathbb{S}^2\tilde{\times} \mathbb{S}^1$, $\mathbb{R}P^2\times \mathbb{S}^1$ admit the $(\mathbb{S}^2\times \mathbb{R})$-geometry. 
\end{case}


\begin{case}[$k=0$, $f=0$, $t=0$]

In this case, the condition $f=0$ implies that the Alexandrov spaces $X_i$ considered are homeomorphic to topological manifolds since any action by isometries on a non-manifold Alexandrov space has fixed points (the $SF$-points). Moreover, the conditions $f=t=0$ imply that $X_i$ is a Seifert manifold (see for example \cite[Page 150, Lines 16--20]{OrlRay}). This in turn implies that $X_i$ is geometric by \cite[Theorem 5.3, (ii)]{Sco}. The possible geometries appearing in this case are $\mathbb{S}^3$, $\mathbb{R}^3$, $\mathbb{S}^2\times \mathbb{R}$, $\mathbb{H}^2\times \mathbb{R}$, $\SL$, or $\Nil$.
\end{case}


\begin{case}[$k=0$, $f=0$, $t>0$] {As in the previous case, the $X_i$ are homeomorphic to topological manifolds since $f=0$. By the definition of the local circle action invariants, in this case we have that $b=0$. More explicitly, $X_i$ is determined by the set of invariants 
\[
\{0; (\varepsilon, g, 0, t); \{(\alpha_i,\beta_i)\}_{i=1}^n  \}.
\]

Furthermore, since $t>0$, the $X_i$ are non-orientable. Therefore, by the proof of \cite[Theorem 3]{OrlRay}, the orientable double cover $\tilde{X}_i$ of $X_i$ is determined by the set of invariants
\[
\{-n; (\tilde{\varepsilon}, \tilde{g}, 0, 0); (\alpha_1,\beta_1), \{(\alpha_i, \alpha_i-\beta_i)\}_{i=1}^n \}.
\] 
Here, $\tilde{\varepsilon}=o_1$ if $\varepsilon\in\{o_1,o_2,n_1\}$ and $\tilde{\varepsilon}=n_2$ if $\varepsilon\in\{n_2, n_3, n_4\}$. The value of $\tilde{g}$ is $2g+t-1$ if $\varepsilon\in\{o_1,o_2,n_2\}$, it is $g+t-1$ if $\varepsilon=n_1$ and it is $g+t-2$ if $\varepsilon\in\{n_3, n_4\}$.

We observe then that $\tilde{X}_i$ falls under the considerations of the previous case and therefore is a Seifert manifold. Therefore $X_i$ is doubly covered by a geometric manifold and hence it itself is geometric, the possible geometries being the same as in the last case. 
} 
\end{case}


\begin{case}[$k\neq 0$, $f>0$]

In this case the classes $o_1$, $o_2$ merge into a single class $o$ while $n_1$, $n_2$, $n_3$ and $n_4$ merge into one class $n$. The decomposition of Theorem \ref{THM:INVARIANTS} in this case is the following. If $t>0$, then $X_i$ is homeomorphic to 
\[
\left(\csum_{\varphi+1}(\mathbb{S}^2\times \mathbb{S}^1)\right)\#\left( \csum_{t}\mathbb{R}P^2\times \mathbb{S}^1 \right)\#\left(\csum_{j=1}^n L(\alpha_j,\beta_j)\right)\# \left( \csum_{r} \mathrm{Susp}(\mathbb{R}P^2) \right).
\]
If $t=0$, then $X_i$ is homeomorphic to 
\[
\left(\mathbb{S}^2\tilde{\times}\mathbb{S}^1\right)\!\#\! \left(\!\csum_{\varphi}\mathbb{S}^2\times \mathbb{S}^1\!\right)\!\#\!\left(\! \csum_{t}\mathbb{R}P^2\times \mathbb{S}^1\! \right)\!\#\!\left(\csum_{j=1}^n L(\alpha_j,\beta_j)\right)\!\#\! \left( \csum_{r} \mathrm{Susp}(\mathbb{R}P^2)\! \right).
\]
Here, the value of $\varphi$ is determined by $f$, $g$ and $\varepsilon$, but the precise value is not necessary for our analysis.

As in the case $k=0, f>0$, since $X_i$ is irreducible, Lemma \ref{L:IRREDUCIBLE_IMPLIES_PRIME} implies that only one connected summand can appear, and in turn this yields that $X_i$ is geometric. 
\end{case}


\begin{case}[$k\neq 0$, $f=0$]
As in the previous cases in which $f=0$, the $X_i$ are homeomorphic to topological manifolds. If $t=0$, then $X_i$ is a Seifert manifold, whereas if $t>0$ its orientable double cover is a Seifert manifold. In both cases the space is geometric.
\end{case}


\subsection*{$X_i$ contains fibers of type $\bpt$}

In this case, for each $X_i$ we consider its branched orientable double cover $\tilde{X}_i$. We observe that $\tilde{X}_i$ does not contain fibers of type $\bpt$ since its subset of topologically singular points is empty. Therefore, by \cite[Corollary 6.2]{GalNun} $\tilde{X}_i$ is homeomorphic to a closed Alexandrov $3$-manifold admitting an isometric local circle action determined by a set of invariants 
\[
\{\tilde{b}; \tilde{\varepsilon},\tilde{g},(\tilde{f},\tilde{k}_1),(\tilde{t},\tilde{k}_2); \{(\tilde{\alpha}_i,\tilde{\beta}_i)\}_{i=1}^{n}\}.
\]
However, since $\tilde{X}_i$ is orientable, we have that $\tilde{t}=0$. The cases having $\tilde{f}=0$ are settled as in the case in which $X_i$ does not contain fibers of type $\bpt$. Hence, in the following we assume that $\tilde{f}>0$. Under this assumption, Theorem \ref{THM:INVARIANTS} yields that $\tilde{X}_i$ is homeomorphic to 
\begin{equation}
\label{EQ:EQUIVARIANT_DECOMPOSITION_DOUBLE_COVER}
\mathbb{S}^3\#\left(\csum_{\varphi}\mathbb{S}^2\times \mathbb{S}^1\right)\# \left(\csum_{j=1}^n L(\alpha_j,\beta_j)\right).
\end{equation}

We point out that $\mathbb{S}^2\tilde{\times}\mathbb{S}^1$ does not appear in the decomposition as a consequence of the orientability of $\tilde{X}_i$. Let us denote by $\iota_i:\tilde{X}_i\to \tilde{X}_i$ the involution such that $\tilde{X}_i/\iota_i$ is homeomorphic to $X_i$. Then, $\iota_i$ induces an involution on the connected sum \ref{EQ:EQUIVARIANT_DECOMPOSITION_DOUBLE_COVER} and we can equip the space with a Riemannian metric which is invariant with respect to $\iota_i$.

 We now observe that, by \cite{DinkLeeb}, there is an equivariant (with respect to $\iota_i$) Ricci flow with surgery on $\tilde{X}_i$. Furthermore, by \cite[Theorem 1.1]{Per3} the Ricci flow goes extinct in finite time. Now, \cite[Corollary 4.5]{Dink} implies that the action of $\iota_i$ is an equivariant connected sum of standard actions on components diffeomorphic to spherical space forms, $\mathbb{S}^2\times\mathbb{S}^1$ or $\mathbb{R}P^3\#\mathbb{R}P^3$. In particular, the connected summands $ L(\alpha_1,\beta_1)\#\cdots \# L(\alpha_n,\beta_n)$ and $\csum_{\varphi}\mathbb{S}^2\times \mathbb{S}^1$ in \eqref{EQ:EQUIVARIANT_DECOMPOSITION_DOUBLE_COVER} are invariant under the action of $\iota_i$.

 We now observe the following. If the connected sum decomposition \eqref{EQ:EQUIVARIANT_DECOMPOSITION_DOUBLE_COVER} of $\tilde{X}_i$ contains both lens space $L(\alpha_j,\beta_j)$ and $(\mathbb{S}^2\times \mathbb{S}^1)$-summands, then the $2$-sphere $S$ dividing $\tilde{X}_i$ into the summands  $\csum_{j=1}^n L(\alpha_j,\beta_j)$ and $\csum_{\varphi}\mathbb{S}^2\times \mathbb{S}^1$ can be taken to  be invariant under the the action of $\iota_i$. 
By the classification of the involutions of the $2$-sphere, $S/\iota_i\subset X_i$ is either homeomorphic to $\mathbb{R}P^2$ or to a $2$-sphere. If $S/\iota_i \cong \mathbb{R}P^2$, then we observe that $S/\iota_i$ does not bound a cone over $\mathbb{R}P^2$. Similarly, if $S/\iota_i$ is a $2$-sphere then it does not bound a $3$-ball, contradicting the irreducibility of $X_i$. Therefore, we can assume that 
$\tilde{X}_i$ is either a connected sum of lens spaces or a connected sum of copies of $\mathbb{S}^2\times \mathbb{S}^1$.

\begin{case}[$\tilde{X_i}$ is homeomorphic to $L(\alpha_1,\beta_1)\#\ldots \# L(\alpha_n,\beta_n)$ ]
By \cite[Corollary 3]{KimToll} $\iota_i= h_{i_1}\#\ldots\#h_{i_m}$ and $\tilde{X}_i=M_{i_1}\#\ldots\#M_{i_m}$, where each $h_{i_j}$ is an involution on $M_{i_j}=A_{i_j}\#Q_{i_j}\#A_{i_j}^{\epsilon}$ arising from operation I-1 (see \cite[Page 260]{KimToll}. Here, $A_{i_j}$ is a connected sum of lens spaces, $A_{i_j}^{\epsilon}$ is either $A_{i_j}$ or $A_{i_j}$ with the orientation reversed, and $Q_{i_j}$ is either $\mathbb{R}P^3$ or $\mathbb{S}^3$. 
Moreover, since the involution $\iota_i\colon \tilde{X}_i\to\tilde{X}_i$ has fixed points, at least one of the $h_{i_j}$ must have fixed points. This is the involution we will consider in what follows and fix the index $i_j$.

If $Q_{i_j}$ is $\mathbb{S}^3$, then the involution $h_{i_j}$  interchanges the summands $A_{i_j}$ and $A_{i_j}^{\epsilon}$. In particular $Q_{i_j}=\mathbb{S}^3$ is invariant under $h_{i_j}$ and, since the involution has isolated fixed points, its restriction to $Q_{i_j}=\mathbb{S}^3$ corresponds to the suspension of the antipodal map on $\mathbb{S}^2$. The quotient of $A_{i_j}\#Q_{i_j}\#A_{i_j}^{\epsilon}$ by $h_{i_j}$ is homeomorphic to $A_{i_j}\#\Susp(\mathbb{R}P^2)$, where the $2$-sphere $S$ dividing the connected sum is the projection of the two $2$-spheres $S_1,S_2$ that divide the connected sum $A_{i_j}\#Q_{i_j}\#A_{i_j}^{\epsilon}$ into three summands. Therefore, the sphere $S$ does not bound a $3$-ball, contradicting the irreducibility of $X_i$. 

 If $Q_i$ is $\mathbb{R}P^3$ then, we recall that by \cite{Kwun} there is exactly one orientation reversing involution on $\mathbb{R}P^3$. However, this involution has a $2$-dimensional fixed point set. Since $\iota_i$ has only isolated fixed points this is a contradiction.
\end{case}

\begin{case}[$\tilde{X}_i$ is homeomorphic to $\csum_{\varphi}\mathbb{S}^2\times \mathbb{S}^1$]
Let us recall that by \cite{KimToll} every involution on a connected sum of closed $3$-manifolds is constructed by successive application of the so called four \textit{I-operations} (see \cite[Page 260]{KimToll}). In particular $\iota_i$ can be described in such fashion. We will split our analysis in four cases depending on the type of the last I-operation used to construct $\iota_i$. In each case we will conclude that if $X_i$ is homeomorphic to $\csum_{\varphi}\mathbb{S}^2\times \mathbb{S}^1$, then $\varphi=1$. In the remainder of this case, we follow the notation of \cite[Page 260]{KimToll}. 


\begin{subcase}[I-1 operation]
 Here, $M_1$ and $M_2$ are connected sums of copies of $\mathbb{S}^2\times \mathbb{S}^1$. Let $C_1\subset M_1$ be the cell used to perform the I-1 operation. We let $\pi:\tilde{X}_i\to X_i$ be the canonical projection. Then $\pi(\partial C_1)$ is a $2$-sphere in $X_i$. Then, $\pi(\partial C_1)$ bounds a $3$-ball in $X_i$ if and only if either $M_1/\iota_i$ or $M_2/\iota_i$ are homeomorphic to a closed $3$-ball. Therefore, the question reduces to see if the quotient of a connected sum of copies of $\mathbb{S}^2\times \mathbb{S}^1$ can be homeomorphic to a $3$-ball. To address this question, we observe that if the set of fixed points of $\iota_i$ is empty, then $\partial( M_1/\iota_i) =\emptyset$. Then, $M_1/\iota_i$ cannot be homeomorphic to a $3$-ball. Therefore we assume that $\iota_i$ has fixed points on $M_1$. Since the fixed points of $\iota_i$ correspond to topologically singular points of $X_i$ under $\pi$, the quotient $M_1/\iota_i$ cannot be homeomorphic to a $3$-ball. Hence, we conclude that  $\pi(\partial C_1)$ does not bound a $3$-ball in $X_i$. This is a contradiction since $X_i$ is irreducible.
\end{subcase}


\begin{subcase}[I-2 operation]
Let $S$ be the sphere used for the connected sum. Then $\pi(S)$ is homeomorphic to an $\mathbb{R}P^2$ in $X_i$ which does not bound a cone over $\mathbb{R}P^2$. Otherwise, if we assume without loss of generality that $M_1/h_1$ is homeomorphic to $K(\mathbb{R}P^2)$ then $M_1$ would be a $3$-ball, implying that the connected sum $M_1\# M_2$ is trivial. This is a contradiction. 
\end{subcase} 


\begin{subcase}[I-3 operation]
In this case, the quotient of the involution has the form $(M_1/h_1) \# (M_2/h_2)$. If both $h_1$ and $h_2$ have fixed points, then it is clear that $M_i/h_i$ is not homeomorphic to a $3$-ball. Therefore, we now assume, without loss of generality, that the fixed point set of $h_1$ is empty. Then, if $M_1/h_1$ is homeomorphic to a $3$-ball, it follows that $M_1$ is homeomorphic to a disjoint union of two $3$-balls. In particular $M_1$ is disconnected, which is a contradiction.
\end{subcase}


\begin{subcase}[I-4 operation]
The quotient of the involution in this case is constructed as follows. First, we take out small neighborhoods of two topologically singular points in $X_i$. Then, we glue $\mathbb{R}P^2\times I$ to the resulting space along the $\mathbb{R}P^2$ boundaries. Therefore, the space obtained is not irreducible since it contains a $2$-sided $\mathbb{R}P^2$ which does not bound a $K(\mathbb{R}P^2)$. This contradicts the irreducibility of $X_i$.  
\end{subcase}

\end{case}


\section{One-dimensional limit space}
\label{S:ONE_DIM_LIMIT}

 In this section we analyze the case in which the sequence $\{X_i\}_{i=1}^{\infty}$ collapses to a  one-dimensional space $Y$. We distinguish two cases, depending on whether $\partial Y=\emptyset$ or $\partial Y\neq \emptyset$. The homeomorphism type of $X_i$ is described in Table \ref{TBL:1-DIM_LIMIT}. We also point out that, in this section, the assumption of irreducibility is not needed to assert that $X_i$ is geometric. Under such an assumption, at least Cases \ref{SUBCASE:RP3-intD3UBS2}, \ref{SUBCASE:RP3-intD3URP3-intD3}, and  \ref{SUBCASE:BS2UBS2} are ruled out. However, in those cases the same techniques we use throughout the section yield that $X_i$ is geometric and we include them as well. We also observe that in most cases we do not indicate all the possible geometries that the space in question admits. 

\subsection*{$Y$ does not have boundary}

Under this assumption, $X_i$ is homeomorphic to an $F$-fiber bundle $F\hookrightarrow E\stackrel{_\rho}{\to} \mathbb{S}^1$. The possible fibers $F$ are $K^2$, $T^2$, $\mathbb{S}^2$ or $\mathbb{R}P^2$. The universal cover of $E$ is calculated in the following way. We let $\xi:\mathbb{R}\to \mathbb{S}^1$ be the universal cover projection. Now, we take the pullback $\tilde{\rho}:\xi^{*}E\to \mathbb{R}$ of $\rho: E\to \mathbb{S}^1$ with respect to $\xi$ and observe that $\xi^{*}E$ is a trivial bundle since the base $\mathbb{R}$ is contractible. Furthermore, the fiber of $\xi^{*}E$ is homeomorphic to $F$ and therefore $\xi^{*}E \cong F\times \mathbb{R}$.  Moreover, the induced map $\tilde{\xi}:\xi^{*}E\to E$ is a covering map, and we conclude that $E$ is covered by $F\times \mathbb{R}$. 

If $F= K^2$ or $T^2$ then the universal cover of $\xi^{*}E$ (and consequently of $E$) is $\mathbb{R}^2\times \mathbb{R}\cong \mathbb{R}^3$. We further observe that using the usual double covers $T^2\to K^2$ and $\mathbb{S}^2\to \mathbb{R}P^2$ it follows that $T^2\times\mathbb{R}$ covers any torus and Klein-bottle bundles over $\mathbb{S}^1$ and that $\mathbb{S}^2\times\mathbb{R} $ covers any $\mathbb{S}^2$ and $\mathbb{R}P^2$ bundles over $\mathbb{S}^1$. On the other hand, if $F=\mathbb{S}^2$ or $\mathbb{R}P^2$, then the universal cover is $\mathbb{S}^2\times \mathbb{R}$. Hence, $E$ is geometric since its universal cover is geometric for any $F$. We now point out the precise geometries that $E$ admits depending on $F$. 


\begin{case}[$F=\mathbb{S}^2$]
There are two such bundles: $\mathbb{S}^2\times \mathbb{S}^1$ and $\mathbb{S}^2\tilde{\times}\mathbb{S}^1$. Both of them are covered by $\mathbb{S}^2\times \mathbb{S}^1$. Therefore both admit $\mathbb{S}^2\times \mathbb{R}$-geometry.
\end{case}


\begin{case}[$F=T^2$]
We describe $E$ as a mapping torus over $F$ in the following way. Let $S^1\setminus \{\mathrm{pt}\}$ be identified with $I=[a,b]$. Then, the bundle $\rho|_{\rho^{-1}(I)}\colon\rho^{-1}(I)\to I$ is trivial since $I$ is contractible. Therefore, $\rho^{-1}(I)\cong F\times I$. Hence, the space $E$ is obtained as $F\times I/h$ where $h\colon\rho^{-1}(a)\to\rho^{-1}(b)$ is a homeomorphism.  

 Recall that homotopic gluing maps $h$ give rise to isomorphic mapping tori. Then,  \cite[Theorem 13.2, Theorem 13.5]{FarbMarg} yield that $E$ may admit the $\mathbb{R}^3$-, $\Nil$- or $\Sol$-geometries depending on the gluing map $h$.
\end{case}


\subsection*{$Y$ has boundary}  In this case $X_i$ is homeomorphic to a space obtained by gluing two pieces $B$ and $B'$ having as possible boundaries $\mathbb{S}^2$, $\mathbb{R}P^2$, $T^2$ or $K^2$. We analyze each possible gluing $B\cup B'$. Since sometimes we will need to compute the double branched cover of such gluing, we will use the following obvious lemma.

\begin{lem} 
\label{L:DBL_COVER_GLUING}
Let $X$ be a non-manifold Alexandrov $3$-space obtained by gluing two pieces $B$ and $B'$ having as possible boundaries $\mathbb{S}^2$, $\mathbb{R}P^2$, $T^2$ or $K^2$ by a map $\varphi$. Let $\pi\colon\tilde{X}\to X$ be the orientable double branched cover of $X$. Then the following hold: 
\begin{itemize}
	\item[\emph{(i)}] If only one of the pieces, say, $B$, has topological singularities, then $\pi^{-1}(B)$ has an isometric involution with only isolated fixed points, $\partial(\pi^{-1}(B))$ is a $2$-fold cover of $\partial B$, and $\pi^{-1}(B')$ is a two-fold cover of $B'$. 
	\item[\emph{(ii)}] If both pieces $B$ and $B'$ have topological singularities, then $\pi^{-1}(B)$ and $\pi^{-1}(B')$ have isometric involutions with only isolated fixed points and $\partial(\pi^{-1}(B))$ is a $2$-fold cover of $\partial B$. 
	\item[\emph{(iii)}] If a piece $B$ is an orientable manifold, then $\pi^{-1}(B)$ is given by two copies of $B$, and the involution sends one copy to the other;  each copy of $B$ is glued to $\pi^{-1}(B')$ by a copy of $\varphi$.
	\item[\emph{(iv)}] If a piece $B$ is a non-orientable manifold, then $\pi^{-1}(B)$ is given by the orientable double cover of $B$, and it is glued to $\pi^{-1}(B')$ by a two-fold cover of $\varphi$.
\end{itemize}
\end{lem}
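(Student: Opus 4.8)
The statement is called "obvious" by the authors, and rightly so: the proof is a direct unwinding of the construction of the orientable double branched cover $\pi\colon\tilde X\to X$ together with the structure of the decomposition $X=B\cup_\varphi B'$. The plan is to analyze, piece by piece, the preimage under $\pi$ of each of $B$ and $B'$, using the two basic facts established earlier in the excerpt: first, that for a non-manifold Alexandrov $3$-space $X$, the orientable double branched cover is obtained from the orientable double cover of the manifold-part of $X$ by gluing in cones over $\mathbb{R}P^2$ at the lifts of the topologically singular points (equivalently, $\tilde X\to X$ is an orbifold covering of degree two which is branched exactly over $S_X$); and second, that if a subset $A\subset X$ is saturated (a union of fibers of $\pi$, or has saturated boundary), then $\pi^{-1}(A)$ together with the restricted involution recovers $A$ as the quotient.

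First I would set up notation: write $\sigma\colon\tilde X\to\tilde X$ for the canonical isometric involution with $\tilde X/\sigma\cong X$, and recall that $\sigma$ has exactly $|S_X|$ isolated fixed points, one over each topologically singular point, and that the differential of $\sigma$ at such a point is conjugate to the antipodal map on $\mathbb{S}^2$ (this is exactly the local model of the cone over $\mathbb{R}P^2$ sitting inside $\tilde X$). Since the gluing surface $\varphi$ lies in the manifold part of $X$ (each of $\mathbb{S}^2$, $\mathbb{R}P^2$, $T^2$, $K^2$ is a closed $2$-manifold with no topologically singular points of $X$ on it — the singular points of $X$, if any, lie in the interiors of $B$ or $B'$), the surface along which $B$ and $B'$ are glued is disjoint from $S_X$. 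Hence $\pi^{-1}(\partial B)$ is an honest (unbranched) $2$-fold cover of $\partial B$, and $\pi$ restricted over $B$ and over $B'$ is itself a double branched cover — branched only over the singular points of $X$ inside that piece.

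Now I would treat the four clauses in turn, each being a one-line deduction from the above. \emph{(iii):} if $B$ is an orientable manifold containing no singular points of $X$, then $\pi$ is unbranched over $B$, so $\pi^{-1}(B)$ is a genuine double cover of $B$; since $B$ is orientable and $\tilde X$ is orientable with $\sigma$ orientation-reversing, $\sigma$ cannot preserve a connected component of $\pi^{-1}(B)$ while acting as a deck transformation (a nontrivial orientation-reversing deck involution on a connected orientable cover of $B$ would have fixed points, contradicting that $\pi^{-1}(B)\to B$ is unbranched) — so $\pi^{-1}(B)$ is two disjoint copies of $B$ swapped by $\sigma$, each glued to $\pi^{-1}(B')$ along a copy of $\varphi$. \emph{(iv):} if $B$ is a non-orientable manifold with no singular points of $X$, then $\pi^{-1}(B)$ is again an unbranched connected double cover of $B$; it must be the orientable double cover, since $\pi^{-1}(B)\subset\tilde X$ is orientable; and its boundary $\pi^{-1}(\partial B)$, being the restriction of this cover, maps to $\partial B$ by the double cover, i.e.\ $\varphi$ lifts to a two-fold cover of $\varphi$. \emph{(i) and (ii):} if a piece $B$ contains topologically singular points of $X$, then $\pi^{-1}(B)$ is the branched double cover of $B$; restricting $\sigma$ to $\pi^{-1}(B)$ gives an isometric involution whose fixed points are exactly the preimages of $S_X\cap B$, hence isolated; its boundary $\pi^{-1}(\partial B)$ is an unbranched double cover of $\partial B$ (since $\partial B$ avoids $S_X$); and the remaining clauses about the other piece follow from (iii) or (iv) applied to $B'$ accordingly.

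The main point to be careful about — the only place where something could go wrong — is the connectedness/triviality dichotomy for $\pi^{-1}(B)$ when $B$ is a manifold piece: one must rule out that $\pi^{-1}(B)$ is connected in case (iii) (it is not, precisely because of orientability) and confirm it is connected in case (iv) (it is, because the orientation double cover of a non-orientable $B$ is connected and it is the cover realized inside the orientable space $\tilde X$). This is where the orientability of $\tilde X$ does the work. Everything else is a formal consequence of $\pi$ being a degree-two orbifold covering branched exactly over $S_X$ and of $S_X$ being disjoint from the gluing locus.
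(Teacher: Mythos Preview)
The paper gives no proof of this lemma; it is introduced with the phrase ``we will use the following obvious lemma'' and left at that. Your write-up is therefore not so much a comparison as a supplying of the omitted details, and it does this correctly: the key observations --- that the gluing surface lies in the manifold part of $X$ so $\pi$ is unbranched there, and that $\pi^{-1}(B)$ is a codimension-$0$ submanifold of the orientable $\tilde X$ and hence itself orientable --- are exactly what makes the four clauses immediate.

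One small slip: in your argument for (iii) you write that ``a nontrivial orientation-reversing deck involution on a connected orientable cover of $B$ would have fixed points.'' That is not the reason; free orientation-reversing involutions on orientable $3$-manifolds certainly exist. The correct deduction is the one you are implicitly using anyway: if $\pi^{-1}(B)$ were connected, then $\sigma|_{\pi^{-1}(B)}$ would be a free orientation-reversing involution (free because $B$ contains no branch points, orientation-reversing because $\sigma$ is so on all of $\tilde X$), and hence the quotient $B$ would be non-orientable, contradicting the hypothesis. With that adjustment your argument is clean.
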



\begin{case}[$\partial B=\mathbb{S}^2$]
The possible pieces satisfying this assumption are $D^3$, $\mathbb{R}P^3\setminus \mathrm{int}D^3$ and $\bstwo$. We now examine each possible combination of these pieces. 


\begin{subcase}[$D^3\cup D^3$] In this case, $X$ is homeomorphic to $\mathbb{S}^3$, which admits spherical geometry.
\end{subcase}


\begin{subcase}[$D^3\cup \left(\mathbb{R}P^3 \setminus \mathrm{int} (D^3)\right)$]
We have that $\mathbb{R}P^3\setminus \mathrm{int}(D^3) \cup_{\partial} D^3\cong \mathbb{R}P^3$, which admits spherical geometry.
\end{subcase}


\begin{subcase}[$D^3\cup \bstwo$]
By \cite[Remark 2.62]{MitYam}, $\bstwo$ is homeomorphic to $K_1(\mathbb{R}P^2)\cup_{\mathrm{Mo}}K_1(\mathbb{R}P^2)$. Using a similar argument to \cite[Lemma 2.61]{MitYam}, it can be proved that the topology of $\bstwo$ does not depend on the gluing map $\mathrm{Mo}\to \mathrm{Mo}$ used. Therefore, using the identity map $\mathrm{id}\colon\mathrm{Mo}\to \mathrm{Mo}$ we obtain that $\bstwo \cong \mathrm{Susp}(\mathbb{R}P^2)\setminus D^3$. Hence, $D^3\cup \bstwo \cong \mathrm{Susp}(\mathbb{R}P^2)$ which admits spherical geometry.  
Alternatively, we can use Lemma~\ref{L:DBL_COVER_GLUING}. The description of $\bstwo$ implies that $\pi^{-1}(\bstwo)$ is homeomorphic to $\mathbb{S}^2\times [-1,1]$ and the involution sends the point $(x,t)$ to $(\sigma(x),-t)$, where $\sigma:\mathbb{S}^2\to\mathbb{S} ^2$ is the suspension of the antipodal map on $\mathbb{S}^1$. On the other hand, $\pi^{-1}(B^3)$ consists of two copies of $B^3$. Hence $\tilde{X}$ is $\mathbb{S}^3$ where the involution sends $(x,y,z,t)$ to $(x,-y,-z,-t)$. This is the suspension of the antipodal map in the hyperplane $(y,z,t)$. Hence the quotient is $\Susp(\mathbb{R}P^2)$.
\end{subcase}


\begin{subcase}[$\left(\mathbb{R}P^3\setminus \mathrm{int}(D^3)\right)\cup \bstwo$]
\label{SUBCASE:RP3-intD3UBS2}
By \cite[Remark 2.62]{MitYam}, this space is homeomorphic to $\mathbb{R}P^3\# \mathrm{Susp}(\mathbb{R}P^2)$. Since this space, as the ones in the next two cases, is not irreducible, it does not fall into the family of spaces we are considering. For completeness, however, we will show that it is geometric.
We now obtain the orientable branched double cover. We take out small cones centered at each of the two topologically singular points of $\mathbb{R}P^3\# \mathrm{Susp}(\mathbb{R}P^2)$. The resulting space is homeomorphic to $\mathbb{R}P^3\#(\mathbb{R}P^2\times [-1,1])$. Now, since $\mathbb{R}P^3$ is orientable, the orientable double cover of $\mathbb{R}P^3 \setminus D^3$ is $\left(\mathbb{R}P^3\setminus D^3\right)\times \{-1,1\}$. On the other hand, the orientable double cover of $(\mathbb{R}P^2\times [-1,1])\setminus D^3$ is $(\mathbb{S}^2\times [-1,1])\setminus D_1^3\cup D^3_2$, where $D^3_1$ and $D^3_2$ are two disjoint $3$-balls.  Therefore, the orientable double cover of $\mathbb{R}P^3\#(\mathbb{R}P^2\times [-1,1])$ is 
\[
\left((\mathbb{R}P^3\setminus D^3)\times \{-1\}\right)\cup_{\partial D^3_1}\left((\mathbb{S}^2\times [-1,1])\setminus D_1^3\cup D^3_2\right)\cup_{\partial D^3_2}\left((\mathbb{R}P^3\setminus D^3)\times \{1\}\right),
\]
which by definition is $\mathbb{R}P^3\#\left( \mathbb{S}^2\times [-1,1]\right)\#\mathbb{R}P^3$. Gluing two $3$-balls along the two boundary components of this space, we obtain the branched orientable double cover $\mathbb{R}P^3\#(\mathbb{S}^3)\#\mathbb{R}P^3\cong \mathbb{R}P^3\#\mathbb{R}P^3$. The involution can be made isometric with respect to a non-negatively curved metric on $\mathbb{R}P^3\#\mathbb{R}P^3$ with $(\mathbb{S}^2\times\mathbb{R})$-geometry (see \cite[Section 5.2]{Dink}). Therefore, $\mathbb{R}P^3\# \mathrm{Susp}(\mathbb{R}P^2)$ is a closed Alexandrov space  admitting a metric with non-negative curvature. We point out that this space should be added to the list of closed, non-negatively curved Alexandrov $3$-spaces in Theorem~1.3 in the published version of \cite{GalGui}.

\end{subcase}


\begin{subcase}[$\left(\mathbb{R}P^3\setminus \mathrm{int}(D^3)\right)\cup\left( \mathbb{R}P^3\setminus \mathrm{int}(D^3)\right)$]
\label{SUBCASE:RP3-intD3URP3-intD3}
 This case was considered in Case (1) of the proof of \cite[Theorem 0.6]{ShiYam}. By definition of the connected sum, $\mathbb{R}P^3\setminus \mathrm{int}(D^3)\cup_{\partial}\mathbb{R}P^3\setminus \mathrm{int}(D^3)=\mathbb{R}P^3\# \mathbb{R}P^3$. Therefore, this space admits $(\mathbb{S}^2\times \mathbb{R})$-geometry. Observe that this space is not irreducible.
\end{subcase} 
 

\begin{subcase}[$\bstwo\cup \bstwo$]
\label{SUBCASE:BS2UBS2}
 It follows from \cite[Remark 2.62]{MitYam}, that this space is homeomorphic to $\mathrm{Susp}(\mathbb{R}P^2)\# \mathrm{Susp}(\mathbb{R}P^2)$. Therefore, this space admits the $(\mathbb{S}^2\times \mathbb{R})$-geometry (see the proof of \cite[Theorem 1.3]{GalGui}, Case 2), part (a) of the case on which $X$ is not a topological manifold). As in the preceding two cases, this space is not irreducible.
\end{subcase}

\end{case}


\begin{case}[$\partial B=\mathbb{R}P^2$]
The only piece satisfying this requirement is $K_1(\mathbb{R}P^2)$. Therefore, by \cite[Lemma 2.61]{MitYam}, the only space obtained here is $\mathrm{Susp}(\mathbb{R}P^2)$ which has spherical geometry. \\

\end{case}


\begin{case}[$\partial B=T^2$]
The collection of pieces that meet this condition is $D^2\times \mathbb{S}^1$, $\mathrm{Mo}\times \mathbb{S}^1$, $K^2\widetilde{\times}I$, $\bsfour$.


\begin{subcase}[$(D^2\times \mathbb{S}^1) \cup (D^2\times \mathbb
{S}^1)$]
 This case was considered in Case (2-i) of the proof of \cite[Theorem 0.6]{ShiYam}. Gluing two copies of a solid torus $D^2\times \mathbb{S}^1$ by a homeomorphism of their boundaries we obtain, by definition,  all possible lens spaces. Therefore this space admits either $\mathbb{S}^3$- or $(\mathbb{S}^2\times \mathbb{R})$-geometry (in the case of $\mathbb{S}^2\times\mathbb{S}^1$).
\end{subcase}


\begin{subcase}[$(D^2\times \mathbb{S}^1) \cup_{\varphi} (\mathrm{Mo}\times \mathbb{S}^1)$]
\label{SUBCASE:D2xS1UMoxS1}
Let us consider the double cover
\[
\rho\colon \mathbb{S}^1\times I \times \mathbb{S}^1
\to \mathrm{Mo}\times \mathbb{S}^1 
\]
given by the involution $\iota\colon\mathbb{S}^1\times I \times \mathbb{S}^1\to \mathbb{S}^1\times I \times \mathbb{S}^1$ defined by $(x,t,y)\mapsto (-x,-t,y)$. 
Note that $\rho$ is the orientable double cover of $\mathrm{Mo}\times \mathbb{S}^1$ since it is a double cover and $\iota$ is orientation-reversing.  We now observe that since $D^2\times \mathbb{S}^1$ is orientable, its orientable double cover consists of two disjoint copies
of $D^2\times \mathbb{S}^1$.

By Lemma \ref{L:DBL_COVER_GLUING}, the orientable double cover of $(D^2\times \mathbb{S}^1) \cup_{\varphi} (\mathrm{Mo}\times \mathbb{S}^1)$ is obtained by gluing each copy of $D^2\times \mathbb{S}^1$ to a connected component of the boundary of $\mathbb{S}^1\times I \times \mathbb{S}^1$ via $\varphi$, thus resulting on $\mathbb{S}^2\times \mathbb{S}^1$; therefore this space admits $(\mathbb{S}^2\times \mathbb{R})$-geometry. 
\end{subcase}


\begin{subcase}[$(D^2\times \mathbb{S}^1) \cup (K^2\widetilde{\times} I)$]
\label{Subcase:D2timesS1UK2tildeI}
This space was considered in Case (2-ii) of the proof of \cite[Theorem 0.6]{ShiYam}. The space $K^2\widetilde{\times}I$ is homeomorphic to $\mathrm{Mo}\tilde{\times}\mathbb{S}^1$. This implies that  $(D^2\times \mathbb{S}^1) \cup (K^2\widetilde{\times} \mathbb{S}^1)$ is homeomorphic to $D^2\times \mathbb{S}^1 \cup \mathrm{Mo}\widetilde{\times} \mathbb{S}^1$ which, depending on the gluing homeomorphism, is either a prism manifold, $\mathbb{S}^1\times \mathbb{S}^2$, or $\mathbb{R}P^3\# \mathbb{R}P^3$. Hence, the possible geometries for this case are the $\mathbb{S}^3$- and $(\mathbb{S}^2\times\mathbb{R})$-geometries. 
\end{subcase}


\begin{subcase}[$(D^2\times \mathbb{S}^1) \cup \bsfour$]
\label{SUBCASE:D2timesS1UBS4}
By the description of $\bsfour$ in \cite[Corollary 2.56]{MitYam}, as a quotient of $T^2\times [-1,1]$ with respect to an orientation-reversing involution with only fixed points, the branched orientable double cover of $\bsfour$ is $T^2\times[-1,1]$. On the other hand, the orientable double cover of $D^2\times \mathbb{S}^1$ is a disjoint union $D^2\times \mathbb{S}^1 \sqcup D^2\times \mathbb{S}^1$.  By Lemma \ref{L:DBL_COVER_GLUING}, in the branched cover the gluing homeomorphisms $\partial (D^2\times \mathbb{S}^1)\to T^2\times \{1\}$ and $\partial (D^2\times \mathbb{S}^1)\to T^2\times \{-1\}$ 
coincide with each other. Therefore, 
\[
(D^2\times \mathbb{S}^1)\cup (T^2\times [-1,1])\cup (D^2\times \mathbb{S}^1) \cong (D^2\times \mathbb{S}^1) \cup_{\mathrm{id}} (D^2\times \mathbb{S}^1)\cong \mathbb{S}^2\times \mathbb{S}^1.
\]
 Thus $(D^2\times \mathbb{S}^1) \cup \bsfour$ admits $(\mathbb{S}^2\times \mathbb{R})$-geometry.
\end{subcase}


\begin{subcase}[$(\mathrm{Mo}\times \mathbb{S}^1)\cup (\mathrm{Mo}\times \mathbb{S}^1)$]
\label{SUBCASE:MoxS1UMoxS1}
We consider $\rho: T^2\times I \to \mathrm{Mo}\times \mathbb{S}^1$, the double cover of Case \ref{SUBCASE:D2xS1UMoxS1}.  Once again, by Lemma \ref{L:DBL_COVER_GLUING},  the orientable cover of our space is then obtained by gluing pairwise the boundary components of two copies of $T^2\times I$;  this results in a fiber bundle over $\mathbb{S}^1$ with fiber a torus, and will admit an $\mathbb{R}^3$-, $\Nil$- or $\Sol$- geometry depending on the map $\varphi$.
\end{subcase}


\begin{subcase}[$(\mathrm{Mo}\times \mathbb{S}^1) \cup (K^2\widetilde{\times} I)$]
\label{SUBCASE:MoxS1UK2tildeI}
Let us note that the canonical projection $\xi:\mathbb{S}^1\times I \times \mathbb{S}^1\to \mathrm{Mo}{\times}\mathbb{S}^1$ is the orientable double cover of this piece. 
On the other hand, $K^2\widetilde{\times} I$ being orientable, its orientable double cover is given by two  copies of itself. Thus the orientable double cover of $(\mathrm{Mo}\times \mathbb{S}^1) \cup (K^2\widetilde{\times} I)$ is obtained by gluing together both copies of 
$K^2\widetilde{\times} I$ along their boundary where the gluing map respects the $I$-bundle structure of each $K^2\widetilde{\times} I$ . 

Such space can be obtained as the quotient of
$\mathbb{T}^2\times \mathbb{S}^1$ by an isometric involution of the flat metric:
choose some involution $\sigma:\mathbb{T}^2\to\mathbb{T}^2$ with $\mathbb{T}^2/\sigma\simeq K^2$, and define the map
\[
\tau:\mathbb{T}^2\times \mathbb{S}^1\to\mathbb{T}^2\times \mathbb{S}^1, \quad \tau(p,z)=(\sigma(p), \bar{z}).
\] 
Therefore, $(\mathrm{Mo}\times \mathbb{S}^1) \cup (K^2\widetilde{\times} I)$ admits $\mathbb{R}^3$-geometry.
\end{subcase}


\begin{subcase}[$(\mathrm{Mo}\times \mathbb{S}^1) \cup \bsfour$]
\label{SUBCASE:MoxS1UBS4}
{
As in case \ref{SUBCASE:D2timesS1UBS4}, 
 $\bsfour$ is isometric to a quotient of $T^2\times [-1,1]$ by an orientation reversing involution with only fixed points (cf. \cite[Corollary 2.56]{MitYam}). Therefore, in similar fashion to the previous two Cases, the space $(\mathrm{Mo}\times \mathbb{S}^1) \cup\bsfour$ is doubly covered by a $T^2$-bundle over $\mathbb{S}^1$ and therefore accepts $\mathbb{R}^3$-, $\Nil$ or $\Sol$-geometry depending on the gluing.
 }
\end{subcase}


\begin{subcase}[$(K^2\widetilde{\times} I) \cup (K^2\widetilde{\times} I)$]
\label{SUBCASE:K2tildeIUK2tildeI}
This case was considered in Case (3) of the proof of \cite[Theorem 0.6, page 31]{ShiYam}, where it was observed that the space in question is doubly covered by a $T^2$-bundle over $\mathbb{S}^1$. Therefore, the space is geometric, with possible geometries $\mathbb{R}^3$, $\Nil$ or $\Sol$. The precise analysis is analogous to that of Case \ref{SUBCASE:MoxS1UMoxS1}.
\end{subcase}


\begin{subcase}[$(K^2\widetilde{\times} I) \cup\bsfour$]
The analysis of this case is analogous to that of Case \ref{SUBCASE:MoxS1UK2tildeI}.
\end{subcase}


\begin{subcase}[$\bsfour\cup\bsfour$]
Since each $\bsfour$ lifts to $\mathbb{T}^2\times I$, the orientable double cover of this space corresponds to $\mathbb{T}^3$, and thus admits either flat, $\Sol$ or $\Nil$ metric. 
\end{subcase}


\end{case}


\begin{case}[$\partial B=K^2$] In the remaining cases, the pieces we deal with are $\mathbb{S}^1\widetilde{\times} D^2$, $K^2\hat{\times} I$, $\bpt$ and $\bptwo$.


\begin{subcase}[$(\mathbb{S}^1\widetilde{\times} D^2) \cup (\mathbb{S}^1\widetilde{\times} D^2)$]
\label{SUBCASE:D2tildetimesS1UD2tildetimesS1}
The space $\mathbb{S}^1\widetilde{\times} D^2$ is a solid Klein bottle, and as such, its orientable double cover is $\mathbb{S}^1\times D^2$ with the involution $\rho(x,y)=(-x,\overline{y})$.
Let $\varphi: \mathbb{S}^1\widetilde{\times} \mathbb{S}^1 \to \mathbb{S}^1\widetilde{\times} \mathbb{S}^1$ be the homeomorphism used to produce the gluing. 

Using standard theory for covering spaces, there is a lift of $\varphi$ that we denote $\widetilde{\varphi}:\mathbb{S}^1\times \mathbb{S}^1\to \mathbb{S}^1\times \mathbb{S}^1$; thus the orientable cover of our space is obtained by gluing two copies of $\mathbb{S}^1\times D^2$ by a homeomorphism of the torus, resulting on spaces admitting $\mathbb{S}^3$- or $\mathbb{S}^2\times \mathbb{R}$-geometries.
\end{subcase}


\begin{subcase}[$(\mathbb{S}^1\widetilde{\times} D^2)\cup (K^2\hat{\times}I)$]
\label{SUBCASE:D2tildetimesS1UK2hattimesI}
As in the previous case, $\mathbb{S}^1\widetilde{\times}D^2 $ lifts to  $\mathbb{S}^1{\times}D^2$ in the orientable cover.  On the other hand, by the arguments outlined in the final part of Section \ref{S:PRELIM}, the orientable twofold cover of $K^2\hat{\times}I$ is $K^2\widetilde{\times}I$.
 Thus by Lemma \ref{L:DBL_COVER_GLUING}, the double cover of the space under examination is obtained gluing  $\mathbb{S}^1{\times}D^2$ to $K^2\widetilde{\times}I$ along their boundary. 
This case was examined in case \ref{Subcase:D2timesS1UK2tildeI}, where it was shown that the allowed geometries were $\mathbb{S}^3$ and $\mathbb{S}^2\times\mathbb{R}$. 
\end{subcase}


\begin{subcase}[$(\mathbb{S}^1\widetilde{\times} D^2) \cup \bpt$]
 Both of the pieces have $D^2\times \mathbb{S}^1$ as orientable double cover. Then, by lifting the gluing  homeomorphism of $K^2$ to a homeomorphism of $T^2$ , we get that the space $\mathbb{S}^1\widetilde{\times} D^2 \cup \bpt$ is doubly covered by $D^2\times \mathbb{S}^1 \cup_{\partial} D^2\times \mathbb{S}^1$, that is,  a lens space or $\mathbb{S}^2\times \mathbb{S}^1$; the geometries can be $\mathbb{S}^3$ or $\mathbb{S}^2\times\mathbb{R}$.
\end{subcase}


\begin{subcase}[$(\mathbb{S}^1\widetilde{\times} D^2) \cup \bptwo$]
\label{SUBCASE:D2tildetimesS1UBP2}

As in the previous cases, $\mathbb{S}^1\widetilde{\times} D^2$ has $\mathbb{S}^1\times D^2$ as orientable double cover. On the other hand, the orientable double branched cover of $\bptwo$ is $K^2\widetilde{\times}I$. 
One sees that as follows. Recall that $K^2\widetilde{\times}I$ is obtained from $T^2\times[-1,1]$ by the $\mathbb{Z}_2$-action induced by the involution
\[
T^2\times[-1,1]\to T^2\times[-1,1]
\]
\[
((z_1,z_2),t)\mapsto ((\overline{z}_1,-z_2,-t)).
\]
Observe that the quotient of $T^2\times\{0\}$ is the core Klein bottle in $K^2\widetilde{\times}I$.
Denote the points in $K^2\widetilde{\times}I$ by $[z_1,z_2,t]$ and consider the map
\[
\varphi:K^2\widetilde{\times}I\to K^2\widetilde{\times}I
\]
\[
[z_1,z_2,t]\mapsto [-z_1,\overline{z}_2,t].
\]
This map has two fixed points, namely $[i,i,0]$ and $[i,-i,0]$. Moreover, the map induces an involution on the core Klein bottle $K^2\times\{0\}$ of  $K^2\widetilde{\times}I$ with the preceding two points as fixed points. By the classification of involutions on $K^2$ (see \cite{Nat}), $K^2\times\{0\}/\varphi$ is isometric to an $\RP^2$ with a flat metric with two metric singularities. Comparing this construction with the construction in \cite[Corollary 2.56]{MitYam}, one sees that the double branched cover of $\bptwo$ is $K^2\widetilde{\times}I$.

It follows from standard covering spaces theory, as in Case \ref{SUBCASE:D2tildetimesS1UD2tildetimesS1}, that there is a lift of the gluing homeomorphism. Therefore, the orientable double cover of $(\mathbb{S}^1\widetilde{\times} D^2) \cup \bptwo$ is the gluing of a solid torus $\mathbb{S}^1\times D^2$ and $K^2\widetilde{\times}I$ via a homeomorphism of the boundary. Hence, by Case \ref{Subcase:D2timesS1UK2tildeI}, it admits either $\mathbb{S}^3$- or $(\mathbb{S}^2\times \mathbb{R})$-geometry.
\end{subcase}


\begin{subcase}[$(K^2\hat{\times}I) \cup (K^2\hat{\times}I)$]
This is a \textit{Klein bottle semibundle}; these were classified in  \cite{GLHeiGA}. To check that the space is geometric, consider the oriented double cover, obtained by gluing two copies of $K^2\widetilde{\times} I$ along their boundaries. This corresponds to case \ref{SUBCASE:K2tildeIUK2tildeI}.
\end{subcase}


\begin{subcase}[$(K^2\hat{\times}I) \cup\bpt$]
Recall that $K^2\hat{\times}I$ is doubly covered by $K^2\widetilde{\times} I$ while the oriented branched cover of $\bpt$ is $\mathbb{S}^1\times D^2$.
This situation was considered in case \ref{Subcase:D2timesS1UK2tildeI}.
\end{subcase}


\begin{subcase}[$(K^2\hat{\times}I) \cup\bptwo$]
Both pieces have $K^2\widetilde{\times}I$ as orientable double cover and therefore, the orientable double cover of $(K^2\hat{\times}I) \cup\bptwo$ is a gluing of two copies of $K^2\widetilde{\times}I$ by a homeomorphism between the boundaries. Then, it follows from Case \ref{SUBCASE:K2tildeIUK2tildeI} that the space in question admits the $\mathbb{R}^3$-, $\Nil$- or $\Sol$-geometry depending of the gluing homeomorphism. 
\end{subcase}


\begin{subcase}[$\bpt\cup \bpt$]
\label{SUBCASE:BPTUBPT}
The orientable double cover of $\bpt$ is a solid torus. Therefore, $\bpt\cup\bpt $ is doubly covered by a union of two solid tori along the boundary, i.e. a lens space.  
\end{subcase}


\begin{subcase}[$\bpt\cup\bptwo$]
The orientable double cover of $\bpt$ is a solid torus $\mathbb{S}^1\times D^2$, while the orientable double cover of $\bptwo$ is $K^2\widetilde{\times}I$. Therefore, as in Case \ref{SUBCASE:D2tildetimesS1UBP2}, it follows from Case \ref{SUBCASE:D2tildetimesS1UK2hattimesI} that $\bpt\cup\bptwo$ admits the $\mathbb{S}^3$- or $(\mathbb{S}^2\times \mathbb{R})$-geometry.
\end{subcase}


\begin{subcase}[$\bptwo\cup\bptwo$]
The space $\bptwo$ has $K^2\widetilde{\times}I$ as orientable double cover and therefore, the branched orientable double cover of  $\bptwo\cup\bptwo$ consists of a gluing of two copies of $K^2\widetilde{\times}I$ by a homeomorphism of the boundary. The space obtained by such construction was considered in Case \ref{SUBCASE:K2tildeIUK2tildeI}, from which it follows that $\bptwo\cup\bptwo$ admits the $\mathbb{R}^3$-, $\Nil$- or $\Sol$-geometry depending of the gluing. 
\end{subcase}


\end{case}


\section{Zero-dimensional limit space}
\label{S:ZERO_DIM_LIMIT}
We now address the case in which the limit space $Y$ is zero-dimensional. From the classification of collapsing closed Alexandrov $3$-spaces \cite{MitYam}, summarized in Table \ref{TBL:0-DIM_LIMIT}, we see that there are three possibilities for the homeomorphism type of $X_i$.\\ 

In the case that $X_i$ is a generalized Seifert fiber space $\seif(Z)$ (with $\curv Z\geq 0$) possibly with attached generalized Solid Tori and Klein Bottles, it follows from our analysis in Section \ref{S:TWO_DIM_LIMIT} that $X_i$ is geometric. \\

The next possibility is that $X_i$ is homeomorphic to one of the spaces appearing in Section \ref{S:ONE_DIM_LIMIT}. Therefore, we can conclude from our previous analysis that $X_i$ is geometric.\\

Finally, if $X_i$ is a closed, non-negatively curved Alexandrov space, then it follows from \cite[Theorem 1.3]{GalGui} that $X_i$ is geometric. This concludes the proof of Theorem~\ref{THM:MAIN_THEOREM}.
\hfill $\square$

\bibliographystyle{amsplain}


\end{document}